\newtheorem{te}{Theorem}[section]
\newtheorem{os}[te]{Remark}
\newtheorem{prop}[te]{Proposition}
\newcommand{\de}{\mathrm{d}}
\numberwithin{equation}{section}
\newtheorem{theorem}{Theorem}
\newtheorem{proposition}{Propostion}
\newtheorem{remark}{Remark}[section]
\newtheorem{corollary}[theorem]{Corollary}
\begin{document}

\title[Stochastic solutions to abstract telegraph-type equations]{Stochastic solutions to abstract telegraph-type equations involving fractional dynamics}
\author{Alessandro De Gregorio}
\address{Department of Statistical Sciences, “Sapienza” University of Rome, P.le Aldo Moro, 5, 00185 Rome, Italy}
\email{alessandro.degregorio@uniroma1.it, Corresponding author}
\author{Roberto Garra}
\address{Section of Mathematics, International Telematic University Uninettuno, Corso Vittorio Emanuele II, 39, 00186 Rome, Italy}
\email{roberto.garra@uninettunouniversity.net}

\begin{abstract}
 This paper investigates abstract integro-differential hyperbolic equations, focusing on the probabilistic representation of their solutions. Our analysis is based on fractional derivatives and non-local operators, which are powerful tools for modeling the anomalous behavior and non-Markovian dynamics observed in various phenomena.
 
We first analyze a time-fractional version of the abstract telegraph equation (involving the Caputo derivative), restricting our analysis to positive self-adjoint operators to leverage spectral theory, which includes key operators in applications, such as the fractional Laplace operator. We derive analytical representations for the solution and provide a stochastic solution to the telegraph-diffusion equation for a specific range of the fractional parameter $\alpha$, thereby generalizing existing results. We discuss particular cases involving the fractional Laplace and Bessel-Riesz operators.

Furthermore, we consider the abstract Euler-Poisson-Darboux (EPD) equation, characterized by a singular time coefficient. We demonstrate that the stochastic solution to this EPD equation can be represented in terms of the solution of the abstract wave equation. Crucially, we prove that the solution to the EPD equation admits a representation by means of the Erdelyi-Kober fractional integral. 

Finally, this work provides a comprehensive analysis of both time-fractional and singular-coefficient abstract telegraph-type equations, offering new analytical and stochastic representation formulas.

           \noindent \emph{Keywords}:  Erdelyi-Kober integral, fractional Caputo derivative, fractional Laplacian, 
           Euler-Poisson-Darboux equation, self-adjoint operator, stable subordinator
   
   			\noindent \emph{MSC 2010}: 60G22, 	45K05 
\end{abstract}

\maketitle

\section{Introduction}

Fractional derivatives (i.e., derivative with non-integer order) and non-local operators are now recognized as powerful tools for analyzing the anomalous behavior observed in various phenomena. Such anomalies include nonlinear mean-squared displacement in time, heavy-tailed and skewed marginal distributions, and sample paths with jumps (see, e.g., \cite{metzler2000random}). In this context, time-changed random processes serve as valuable models for these non-Markovian fractional dynamics, explicitly incorporating the system's memory.

For example, replacing the classical time derivative in the heat equation with a time-fractional derivative yields a new integro-differential equation. The stochastic solution to this equation is known to be a Brownian motion subordinated by a random time (specifically, the inverse of a stable subordinator). For a comprehensive discussion on this subject, the reader may consult the overview \cite{meerschaert2013inverse}. Furthermore, fractional calculus is relevant in the analysis of several partial differential equations (PDEs) arising in mathematical physics. For instance, the Euler-Poisson-Darboux equation $$\left[\partial_{t}^2+\frac{2\lambda}{t}\partial_t\right]u=\Delta u,$$ which describes wave propagation, is well-known to admit a solution expressible as the Erdelyi-Kober fractional integral of the D'Alembert solution to the classical wave equation (see, e.g., \cite{erd1970}).

In this paper, we investigate abstract integro-differential hyperbolic equations. In particular we refer to two different families of generalized telegraph-type equations and study the probabilistic representation of their solutions. In our analysis the fractional calculus plays a two-fold role. Indeed, we study  a time-fractional version of the telegraph equation involving the fractional Caputo derivative. Analytic and stochastic solutions to fractional telegraph equations have been studied by several researchers (see, e.g., \cite{cascaval2002fractional}, \cite{orsingher2004time}, \cite{chen2008analytical}, \cite{d2014time}, \cite{li2019fractional} and \cite{angelani2024anomalous}). Furthermore, we introduce an abstract version of the EPD equation and prove that its solution admits a stochastic representation in terms of fractional integrals.

In order to introduce the main object of our interest, let us consider the abstract telegraph equation 
\begin{equation}\label{eq:atpde}
\left[\partial_{t}^2+2\lambda\partial_t+A\right]u=0,
\end{equation}
studied, e.g., in \cite{eckstein1999mathematics} and \cite{griego1971}, where $A$ is a linear operator. Let $\mathbb H$ be an Hilbert space,  with $||\cdot||$ representing the norm induced by the inner product. A time-fractional extension of \eqref{eq:atpde} is given by

\begin{align}\label{tee}
\left[(D_t^{\alpha})^2 +2\lambda D_t^{\alpha} +A\right] u=0, \quad t>0,\alpha\in(0,1],
\end{align}
where $(D_t^{\alpha})^2:=D_t^{\alpha}D_t^{\alpha}, u:=u(t)$, $A$ is a  positive (i.e., nonnegative and injective) self-adjoint operator on $\mathbb H$ (for instance $A=-\Delta=-\sum_{k=1}^d\partial_{x_k}^2$ with $\mathbb H=L^2(\mathbb R^d)$)
and $\lambda> 0.$ 
In \eqref{tee} we deal with time-fractional  Caputo (or Dzhrbashyan-Caputo) derivatives  as follows: let $f:\mathbb R^+_0\to \mathbb H$
\begin{equation}
D_t^\alpha f(t):=\frac{1}{\Gamma(1-\alpha)}\int_0^t \frac{f'(\tau)}{(t-\tau)^{\alpha}}\de\tau,\quad t>0, \ \alpha \in(0,1)
\end{equation}
where $f$ is a.e. differentiable (in norm) for $t>0$ and absolutely continuous, with the integral understood in Bochner's sense. Furthermore, we recall that the Laplace transform of the fractional derivative of order $\alpha \in (0,1)$ is given by
$$\widetilde{\left(D_t^\alpha f\right)}(s):=\int_0^\infty e^{-st}D_t^\alpha f(t)\,\de t=s^{\alpha}\widetilde{f}(s)-s^{\alpha-1}f(0),$$
where $\widetilde{f}(s)$ is the Laplace transform of $f$ and $s$ is a complex number with Re$(s)$ sufficiently large. For more details on fractional calculus, the reader can consult \cite{kilbas2006} and \cite{podlubny1998fractional}. It is worth mentioning that we decided to work with the less general $A$ because, in this setting, it is possible to apply the spectral theory of self-adjoint operators. Furthermore, the most interesting operators in applications fall into the class defined by $A$ (for instance, the fractional Laplace operator). In this paper, we derive analytical representations of the solution to the fractional telegraph equation \eqref{tee}. Furthermore, for $\alpha\in(0,\frac12)$, we are able to provide a stochastic solution to the telegraph-diffusion equation \eqref{tee}, which generalizes the result discussed in \cite{d2014time}. We discuss some particular cases involving operators of interest in applications, such as the fractional Laplace and Bessel-Riesz operators. The case $\alpha\in(\frac12,1)$ has been analyzed in \cite{li2019fractional}, for a general operator $A,$ where the solution is expressed in terms of d'Alembert formulas. It was also anlyzed in \cite{angelani2024anomalous}, for $A=-\partial^2_x,$ where the authors proved that the solution of the fractional telegraph equation admits a random time-changed Kac's representation.

A different class of abstract telegraph-type equations is given by the abstract EPD equation:
\begin{equation}\label{eq:atpde2}
\left[\partial_{t}^2+\frac{2\lambda}{t}\partial_t+A\right]u=0,
\end{equation}
where the singular coefficient $\lambda(t)=\frac{2\lambda}{t}$ appears,  and $A$ is defined as in \eqref{tee}. The equation \eqref{eq:atpde2} and its generalizations have attracted the attention of several researchers over the years; we mention, for instance, \cite{bresters}, \cite{rosencrans1973diffusion}, \cite{shishkina2017general} and \cite{de2020random}. A modified version of the EPD equation has been recently analyzed in \cite{martinucci2025finite}. We are able to represent the stochastic solution to \eqref{eq:atpde2} in terms of the solution of the abstract wave equation $\left[\partial_{t}^2+A\right]u=0.$ Furthermore, it is possible to show that the solution to the EPD equation admits a representation by means of the Erdelyi-Kober fractional integral.

\section{Abstract time-fractional telegraph equations}

\subsection{Introduction and results on fractional telegraph equations}\label{sec:intrfte} Let us deal with the same setting discussed  in \cite{cascaval2002fractional}.  Let  $(\mathbb H,\langle\cdot,\cdot\rangle)$ be a Hilbert space.   Let $A$ be a positive (i.e., $\langle Ax,x\rangle\geq 0, x\in D(A)$) and injective self-adjoint operator on  $\mathbb H$. The multiplication operator version of the spectral theorem allows the following result: there exists a unitary operator $ U:\mathbb H\to L^2(\mathfrak X, \Sigma, \mu)$ and a $\Sigma$-measurable function $m:\mathfrak X\to (0,\infty) $ unique (modulo changes on sets of $\mu$-measure zero) and positive ($\mu$ a.e.) such that
\begin{equation}\label{eq:specdec} U\, A\, U^{-1} f=M_mf:=m f,
\end{equation}
for
$$f\in \text{Dom}( U\, A\, U^{-1} )=\{f\in L^2(\mathfrak X, \Sigma, \mu): mf\in L^2(\mathfrak X, \Sigma, \mu)\}. $$
An interesting application of the spectral theorem is the chance to express a Borel function $G:\mathbb R^+\to\mathbb C$ of $A$ as product of operators; that is
$$G(A)=U^{-1}M_{G(m)}U.$$

Let us deal with the abstract time-fractional telegraph equation 

\begin{align}
&\bigg[(D_t^{\alpha})^2+2\lambda D_t^{\alpha}+A\bigg] u =0,\quad t> 0, \ \alpha \in(0,1], \label{eq:fractelc}\\
&u(0)=f,\ \alpha \in(0,1],\label{eq:fractelc2}\\
& D_t^{\alpha}u(0)=0,\ \alpha \in(1/2,1],\label{eq:fractelc3}
\end{align}
where $t\mapsto u(t)\in \mathbb H$ and $f\in D(A).$
In \cite{cascaval2002fractional}, the authors proved that
\begin{equation}
u_\alpha(t)=U^{-1}\widehat{u}_\alpha(t)
\end{equation}
is the unique solution of \eqref{eq:fractelc}-\eqref{eq:fractelc3}, where $\widehat u_\alpha(t,\xi)\in L^2(\mathfrak X, \Sigma, \mu), \xi\in \mathfrak X,$ is the solution to the following problem
\begin{align}
&\bigg[(D_t^{\alpha})^2+2\lambda D_t^{\alpha}+m(\xi)\bigg]\widehat u(t,\xi)=0,\label{1t}\\
&\widehat u(0,\xi)=\widehat f(\xi),\quad  D_t^{\alpha}\widehat u(0,\xi)=0.\label{1t2}
\end{align}

Furthermore, the Laplace transform of the solution to the problem \eqref{1t}-\eqref{1t2} is given by 
 \begin{align}\label{eq:laplacesol}
 \widetilde{\widehat{u}}_\alpha(s,\xi)&= \frac{(s^{2\alpha-1}+2\lambda
  s^{\alpha-1})\widehat f(\xi)}{s^{2\alpha}+2\lambda s^{\alpha} + m(\xi)},
 \end{align}
and its inverse allows to write down
\begin{align}\label{sol}
 \widehat{u}_\alpha(t,\xi) &= \widehat f(\xi)\widehat \Phi_\alpha(t,\xi) 
\end{align}
where
\begin{align}\label{eq:cf}
\widehat\Phi_\alpha(t,\xi) &:=\frac{1}{2}\left[
\bigg(1+\frac{\lambda}{\sqrt{\lambda-m(\xi)}}\bigg)E_{\alpha,1}(r_1t^{\alpha})+\bigg(1-\frac{\lambda}{\sqrt{\lambda-m(\xi)}}\bigg)E_{\alpha,1}(r_2t^{\alpha})\right]\\
&=E_{\alpha,1}(r_1t^{\alpha})+\frac{(2\lambda+r_2)t^\alpha}{r_1-r_2}[r_1E_{\alpha,\alpha+1}(r_1t^{\alpha})-r_2E_{\alpha,\alpha+1}(r_2t^{\alpha})]\notag
\end{align}
and 
\begin{align}
\nonumber & r_1 := -\lambda+\sqrt{\lambda^2-m(\xi)},\\
\nonumber & r_2 := -\lambda-\sqrt{\lambda^2-m(\xi)},
\end{align}
$$E_{\alpha,\beta}(x) := \sum_{k=0}^\infty \frac{x^{ k}}{\Gamma(\alpha k+\beta)},\quad \alpha,\beta>0, x\in\mathbb C.$$
This latter is the well-known Mittag-Leffler function and \eqref{eq:cf} follows taking into account that $E_{\alpha,\alpha+1}(x)=\frac1x( E_{\alpha,1}-1)$ (see e.g. \cite{gorenflo2020mittag} for details).  We refer to \cite{cascaval2002fractional} for the detailed calculations and the asymptotics of $u_\alpha$. For a discussion on the strong solution of the problem \eqref{eq:fractelc}-\eqref{eq:fractelc3}, the reader can consult  \cite{ashurov2023fractional}. 


\begin{remark}\label{rem1}
    We highlight that $(D_t^\alpha)^2\neq D_t^{2\alpha}.$ Nevertheless, the Cauchy problem \eqref{eq:fractelc}-\eqref{eq:fractelc3} coincides with the following problem involving another kind of fractional telegraph equation (studied, for instance, in \cite{orsingher2004time} and \cite{d2014time})
    \begin{align}\label{eq:fractel2alpha}
&\bigg[D_t^{2\alpha}+2\lambda D_t^{\alpha}+A\bigg] u =0,\quad t> 0, \ \alpha \in(0,1], \\
&u(0)=f,\ \alpha \in(0,1],\notag\\
& \partial_tu(0)=0,\ \alpha \in(1/2,1].\notag
\end{align}
Indeed, under suitable assumptions on $u$, we recall that the Laplace transform for $D_t^{2\alpha}$ is equal to (see Lemma 2.24 in \cite{kilbas2006theory})
$$\widetilde{(D_t^{2\alpha} u)}(s)=s^{2\alpha}\widetilde{u}-s^{2\alpha-1}u(0)-s^{2\alpha-2}\partial_tu(0).$$
Therefore, by using the spectral theorem as above, from \eqref{eq:fractel2alpha}, we get 

\begin{align*}
&\bigg[(D_t^{2\alpha})+2\lambda D_t^{\alpha}+m\bigg]\widehat u(t)=0,\\
&\widehat u(0)=\widehat f,\quad  D_t^{\alpha}\widehat u(0)=0.
\end{align*}
By applying the Laplace transform to the previous problem, we derive that $\widetilde{\widehat u}$ coincides with \eqref{eq:laplacesol}.

\end{remark}
In many interesting cases the Hilbert space $\mathbb H$ coincides with $L^2(\mathbb R^d,\de x)$ or with a suitable subspace of $L^2(\mathbb R^d,\de x).$ 
Furthermore, it is well-known (by Plancharel theorem) that the Fourier transform of a function $f\in L^2(\mathbb R^d,\de x),$ that is $\widehat f(\xi):=\mathcal Ff(\xi):=\int_{\mathbb R^d} e^{i \langle x,\xi\rangle}f(x)\de x,$ with $\xi\in \mathbb R^d,$  represents a unitary operator of $L^2(\mathbb R^d,\de x)$ onto $L^2(\mathbb R^d,\de x)$.  Furthermore, we define inverse Fourier transform as $\mathcal F^{-1}f(x):=\frac{1}{(2\pi)^d}\int_{\mathbb R^d} e^{-i\langle x,\xi\rangle}\widehat f(\xi) \de \xi,$ where $x\in\mathbb R^d.$

\begin{theorem}\label{thm:ft}Let $A$ be a positive self-adjoint operator on $\mathbb H\subseteq L^2(\mathbb R^d,\de x),$ such that the following spectral decomposition holds
$$ A =\mathcal F^{-1}\,m\,\mathcal F$$ and $m$ is an isotropic function; that is $m(\xi)=m(||\xi||),$ for each  $\xi\in \mathbb R^d.$ The solution of the Cauchy problem \eqref{eq:fractelc}-\eqref{eq:fractelc3} becomes the convolution product $u_\alpha(t,x)=(f\ast \Phi_\alpha(t))(x),$ for each $x\in\mathbb R^d,$
where
\begin{equation}\label{eq:invfoursol}
\Phi_\alpha(t,x)=\frac{1}{(2\pi)^{\frac d2}||x||^{\frac d2-1}}\int_0^\infty r^{\frac d2}J_{\frac d2-1}(r||x||)\widehat\Phi_\alpha(t,r)\de r,\quad t>0.
\end{equation}
\end{theorem}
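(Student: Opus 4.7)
The plan is to identify the spectral decomposition \eqref{eq:specdec} with the Fourier diagonalization in the hypothesis, so that $U=\mathcal{F}$, then combine the already-established formula $u_\alpha=U^{-1}\widehat u_\alpha$ with the convolution theorem and reduce the inverse Fourier transform of a radial function to a Hankel integral.

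First I would observe that, under the assumption $A=\mathcal{F}^{-1}\,m\,\mathcal{F}$, the unitary operator appearing in \eqref{eq:specdec} is exactly $\mathcal{F}$ and the multiplier is $m$. Hence, by \eqref{sol}, the solution of \eqref{eq:fractelc}-\eqref{eq:fractelc3} is
\begin{equation*}
u_\alpha(t,x)=\mathcal{F}^{-1}\!\left(\widehat f(\xi)\,\widehat\Phi_\alpha(t,\xi)\right)(x).
\end{equation*}
Because $\widehat\Phi_\alpha(t,\xi)$ is a pointwise multiplier of $\widehat f$, the standard convolution theorem on $L^2(\mathbb R^d)$ gives
\begin{equation*}
u_\alpha(t,x)=(f\ast \Phi_\alpha(t,\cdot))(x),\qquad \Phi_\alpha(t,\cdot):=\mathcal{F}^{-1}\widehat\Phi_\alpha(t,\cdot),
\end{equation*}
provided the product lies in an appropriate distributional class; since $m$ is isotropic and $\widehat\Phi_\alpha$ is a bounded Borel function of $m(\xi)$ (a combination of Mittag-Leffler functions of $r_1,r_2$, which depend on $\xi$ only through $m(\|\xi\|)$), this identification is justified in the tempered distribution sense.

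The second ingredient is the radial reduction. Since $m(\xi)=m(\|\xi\|)$, the expression of $\widehat\Phi_\alpha(t,\xi)$ given in \eqref{eq:cf} shows that $\xi\mapsto\widehat\Phi_\alpha(t,\xi)$ depends only on $\|\xi\|$; write $\widehat\Phi_\alpha(t,\xi)=\widehat\Phi_\alpha(t,\|\xi\|)$ with a slight abuse of notation. For any radial function $g(\xi)=g_0(\|\xi\|)$ on $\mathbb R^d$, the inverse Fourier transform is a radial function expressible as a Hankel transform:
\begin{equation*}
\mathcal{F}^{-1}g(x)=\frac{1}{(2\pi)^{d/2}\,\|x\|^{d/2-1}}\int_0^{\infty} r^{d/2}\,J_{d/2-1}(r\|x\|)\,g_0(r)\,\de r,
\end{equation*}
which is the standard formula obtained by passing to spherical coordinates in $\mathbb R^d$ and using the integral representation of the Bessel function of the first kind of order $d/2-1$. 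Applying this identity to $g_0(r)=\widehat\Phi_\alpha(t,r)$ yields \eqref{eq:invfoursol}.

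The main obstacle is purely a technical/justification issue rather than a conceptual one: the function $\widehat\Phi_\alpha(t,\cdot)$ need not belong to $L^1$ or $L^2$ for every choice of $m$, so the Hankel integral and the convolution must be interpreted in the tempered distribution sense (or under additional decay/smoothing assumptions on $m$ and $f$ that make the integrals absolutely convergent). Once this is granted, the proof reduces to the two algebraic manipulations above: diagonalization via $\mathcal{F}$ together with the convolution theorem, and the reduction of the inverse Fourier transform of a radial function to the Hankel representation \eqref{eq:invfoursol}.
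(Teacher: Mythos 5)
Your proposal is correct and follows essentially the same route as the paper: identify $U=\mathcal F$ so that $u_\alpha=\mathcal F^{-1}(\widehat f\,\widehat\Phi_\alpha)=f\ast\Phi_\alpha$, then reduce the inverse Fourier transform of the radial function $\widehat\Phi_\alpha(t,\|\xi\|)$ to the Hankel-type integral \eqref{eq:invfoursol}, which the paper obtains by the explicit spherical-coordinates computation that underlies the ``standard formula'' you invoke. Your additional remark on interpreting the convolution and the Hankel integral in the tempered-distribution sense is a reasonable technical refinement that the paper leaves implicit, but it does not change the argument.
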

\begin{proof} In this setting, we have that $U=\mathcal F$ with $\mathcal F:\mathbb H\to L^2(\mathbb R^d,\de x)$ and $U^{-1}=\mathcal F^{-1},$ and $\widehat u_\alpha(t)$ in \eqref{sol} coincides with the Fourier transform of the solution of the Cauchy problem \eqref{eq:fractelc}-\eqref{eq:fractelc3}. Therefore, it is enough to compute $\Phi_\alpha$ as inverse Fourier transform of $\widehat\Phi_\alpha$. Let $x:=(x_1,...,x_d)\in\mathbb R^d,$ we can write down 
\begin{align*}
    \Phi_\alpha(t,x)&=(\mathcal F^{-1}\widehat\Phi_\alpha(t))(x)\\
    &=\frac{1}{(2\pi)^d}\int_{\mathbb R^d} e^{-i\langle x,\xi\rangle}\widehat\Phi_\alpha(t,||\xi||) \de \xi\\
    &=\text{(by spherical coordinates transformations)}\\
    &=\frac{1}{(2\pi)^d} \int_0^\infty r^{d-1}\widehat\Phi_\alpha(t,r)\de r\int_0^\pi\de\theta_1\cdots\int_0^\pi\de\theta_{d-1}\int_0^{2\pi}\de\phi\sin^{d-2}\theta_1\cdots\sin\theta_{d-2}\\
    &\quad \times \exp\{-ir(x_d(\sin\theta_1+...+\sin\theta_{d-2}\sin\phi)+...+x_2\sin\theta_1\cos\theta_2+x_1\cos\theta_1)\}
\end{align*}
and the result \eqref{eq:invfoursol} follows by noticing that (see, e.g., \cite{gradshteyn2014table})
\begin{align*}
&\int_0^\pi\de\theta_1\cdots\int_0^\pi\de\theta_{d-1}\int_0^{2\pi}\de\phi\sin^{d-2}\theta_1\cdots\sin\theta_{d-2}\\
    & \times \exp\{-ir(x_d(\sin\theta_1+...+\sin\theta_{d-2}\sin\phi)+...+x_2\sin\theta_1\cos\theta_2+x_1\cos\theta_1)\}\\
&=(2\pi)^{\frac d2}\frac{J_{\frac d2-1}(r||x||)}{(r||x||)^{\frac d2-1}}.
    \end{align*}
\end{proof}

\begin{remark}
It is worth mentioning that \eqref{eq:invfoursol} simplifies in some particular spaces. Since   $J_{-\frac12}(x)=\sqrt{\frac{2}{\pi x}}\cos(x),$ for $d=1$, we get that
$$\Phi_\alpha(t,x)=\frac{1}{\pi}\int_0^\infty \cos(r||x||)\widehat\Phi_\alpha(t,r)\de r ,$$
while, by recalling that $J_{\frac12}(x)=\sqrt{\frac{2}{\pi x}}\sin(x),$ if $d=3$
$$\Phi_\alpha(t,x)=\frac{1}{2\pi^2||x||}\int_0^\infty r\sin(r||x||)\widehat\Phi_\alpha(t,r)\de r.$$
\end{remark}


\subsection{Stochastic solutions for $\alpha\in(0,\frac12]:$ telegraph-diffusion case}
Let us consider the following abstract Cauchy problem involving heat-type equations
with $\alpha \in(0,1/2]$
\begin{align}
&\bigg[(D_t^{\alpha})^2+2\lambda D_t^{\alpha}+A\bigg] u =0,\quad t> 0, \label{eq:fractelcbis}\\
&u(0)=f\label{eq:fractelc2bis},
\end{align}
where $u:\mathbb R^+_0\to \mathbb  H$ and $f\in \mathbb H.$

Let us introduce two independent totally positively 
skewed stable random processes $\{H_1^{2\alpha}(t):t\geq 0\}$ and $\{H_2^\alpha(t) : t\geq 0\},$ where $\alpha\in(0,\frac12],$ having characteristic function given by $$
\mathbb E(e^{i\xi H_2^\alpha(t)})=e^{-\sigma t|\xi|^\alpha(1-i\,\text{sgn}(\xi)\tan\frac{\alpha\pi}{2})},\quad \sigma=\cos\frac{\alpha\pi}{2}.
$$
Now, let us define the inverse
$\{\mathcal{L}^\alpha(t): t\geq0\}$ the (non-negative) inverse process of the sum $H_1^{2\alpha}(t)+(2\lambda)^{1/\alpha}H_2^\alpha(t)$  as follows
\begin{equation}\label{ldef}
\mathcal{L}^\alpha(t) := \inf \bigg\{s\geq 0: \ H_1^{2\alpha}(s)+(2\lambda)^{1/\alpha}H_2^\alpha(s)\geq t\bigg\}, \quad t, \lambda >0. 
\end{equation} 
We denote by $\ell_\alpha(x,t),x\geq 0, t>0,$ the density function of $\mathcal{L}^\alpha(t).$
For a deep discussion on the properties of this process the reader can consult \cite{d2014time}. 
However, this is the inverse of the sum of stable subordinators and it preserves the main properties of the inverse of a stable subordinator, i.e. it is a non-Markovian process with non-stationary, non-independent increments and non-decreasing continuous a.s. sample paths (see e.g. \cite{meerschaert2019inverse} for details).

\begin{theorem} Let $\alpha\in(0,\frac12].$ The unique solution to the problem \eqref{eq:fractelcbis}-\eqref{eq:fractelc2bis}, has the following stochastic  representation
\begin{equation}\label{eq:soldiff}
    u_\alpha(t)=\mathbb E\left[v(\mathcal{L}^\alpha(t))\right].
\end{equation}
where $v(t)=T(t)f, t\geq 0,$ is the unique solution to the abstract Cauchy problem
\begin{align}\label{eq:cauchypar}
[\partial_t+A]v=0,\quad v(0)=f\in D(A),
\end{align}
with $\{T(t):t\geq 0\}$ representing the $ C_0$ contraction semigroup on $\mathbb H$ generated by $-A.$
\end{theorem}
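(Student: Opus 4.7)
The plan is to verify that the right-hand side of \eqref{eq:soldiff} has Laplace transform in $t$ equal to the expression \eqref{eq:laplacesol} already identified in \cite{cascaval2002fractional}, and then invoke uniqueness of the vector-valued Laplace transform. I would first reduce to the spectral side using \eqref{eq:specdec}: since $-A$ generates the $C_0$ semigroup $T(t)$, the functional calculus gives $T(t)=U^{-1}M_{e^{-m t}}U$, hence $v(t)=U^{-1}\bigl(e^{-m(\xi) t}\widehat f(\xi)\bigr)$. Exchanging $U^{-1}$ with the Bochner expectation (legitimate by Fubini and the uniform bound $|e^{-m(\xi)s}|\le 1$), I obtain
\begin{equation*}
\mathbb E\bigl[v(\mathcal L^\alpha(t))\bigr] \;=\; U^{-1}\Bigl(\widehat f(\xi)\,\mathbb E\bigl[e^{-m(\xi)\,\mathcal L^\alpha(t)}\bigr]\Bigr),
\end{equation*}
so it suffices to identify the scalar Laplace transform of $\eta\mapsto\mathbb E[e^{-\eta\mathcal L^\alpha(t)}]$.

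Next I would compute this quantity explicitly. By independence of $H_1^{2\alpha}$ and $H_2^\alpha$ and the classical Laplace exponent $\eta^\beta$ of a $\beta$-stable positive subordinator, the sum $S(t):=H_1^{2\alpha}(t)+(2\lambda)^{1/\alpha}H_2^\alpha(t)$ has Laplace exponent $\Psi(\eta)=\eta^{2\alpha}+2\lambda\eta^\alpha$. Combining Fubini with the standard identity for the density of an inverse subordinator,
$$\int_0^\infty e^{-st}\ell_\alpha(x,t)\,\de t \;=\; \frac{\Psi(s)}{s}\,e^{-x\,\Psi(s)},$$
which itself follows from $\mathbb P(\mathcal L^\alpha(t)\le x)=\mathbb P(S(x)\ge t)$ and a single Laplace-in-$t$ computation, I get
$$\int_0^\infty e^{-st}\,\mathbb E\bigl[e^{-\eta \mathcal L^\alpha(t)}\bigr]\de t \;=\; \frac{\Psi(s)}{s\bigl(\Psi(s)+\eta\bigr)} \;=\; \frac{s^{2\alpha-1}+2\lambda s^{\alpha-1}}{s^{2\alpha}+2\lambda s^\alpha+\eta}.$$
Substituting $\eta=m(\xi)$ and multiplying by $\widehat f(\xi)$ reproduces exactly \eqref{eq:laplacesol}.

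Applying $U^{-1}$ to both sides and invoking uniqueness of the $\mathbb H$-valued Laplace transform gives $u_\alpha(t)=\mathbb E[v(\mathcal L^\alpha(t))]$. The main technical obstacle is the rigorous handling of the Fubini interchanges between $U^{-1}$, the Bochner expectation and the Laplace integral: one must check that $t\mapsto \mathbb E[v(\mathcal L^\alpha(t))]$ is strongly measurable and strongly continuous in $\mathbb H$ with $\|\mathbb E[v(\mathcal L^\alpha(t))]\|\le\|f\|$ (inherited from $\|T(s)f\|\le\|f\|$), so that its Bochner--Laplace transform exists and the scalar spectral identification lifts back through $U^{-1}$. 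The restriction $\alpha\le 1/2$ enters exactly to ensure $2\alpha\in(0,1]$, i.e.\ that $H_1^{2\alpha}$ is a genuine stable subordinator whose Laplace exponent is $\eta^{2\alpha}$; without this the whole derivation of $\Psi$ collapses.
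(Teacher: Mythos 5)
Your proposal is correct and follows essentially the same route as the paper's proof: pass to the spectral side via $U$, compute the Laplace transform in $t$ of $\mathbb E\left[e^{-m(\xi)\mathcal L^\alpha(t)}\right]$, match it with \eqref{eq:laplacesol}, and conclude by uniqueness of the Laplace transform before lifting back through $U^{-1}$. The only difference is cosmetic: you derive $\tilde{\ell}_\alpha(x,s)=\frac{\Psi(s)}{s}e^{-x\Psi(s)}$ from the inverse relation between $\mathcal L^\alpha$ and the subordinator sum, whereas the paper simply cites this formula from \cite{d2014time}.
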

\begin{proof}
By means of the spectral theorem, we can observe that $\widehat v(t,\xi)=\widehat f(\xi) e^{-m(\xi) t},$ where $\widehat f(\xi)\in L^2(\mathfrak X,\Sigma,\mu),$ solves $$[\partial_t+m(\xi)]\widehat v(t,\xi)=0,\quad \widehat v(0,\xi)=\widehat f(\xi),$$ and then  $v(t)=U^{-1} \widehat v(t)$ is the unique solution to the problem \eqref{eq:cauchypar}. Furthermore, by Lumer-Phillips Theorem (see, e.g., Theorem 3.3, pag.26, in \cite{goldstein2017semigroups}), it follows that $-A$ is the generator of  a $ C_0$ contraction semigroup   $\{T(t):t\geq 0\}.$ Therefore the Cauchy problem \eqref{eq:cauchypar} is well-posed; that is the resolvent set $\rho(-A)$ is non-empty and for each $f\in D(-A)$ there is a unique solution $u:\mathbb R^+\to D(-A)$ of \eqref{eq:cauchypar} in $C^1(\mathbb R^+,\mathbb H),$ given by $v(t)=T(t)f, t\geq0$ (see, e.g., Theorem 1.2, pag.83, in \cite{goldstein2017semigroups}).  We recall that the Laplace transform (with respect the variable $t$) of the density function $\ell_\alpha(x,t)$ of $\mathcal{L}^\alpha(t)$ is given by (see (4.14) in \cite{d2014time})
\begin{equation}
\tilde{\ell}_\alpha(x,s)= (s^{2\alpha-1}+2\lambda s^{\alpha-1})e^{-xs^{2\alpha}-2\lambda xs^\alpha},
\end{equation}
where we denoted with $s$ the Laplace parameter. Now, for each $\xi\in \mathfrak X$
\begin{align*}
\int_0^\infty e^{-st}\mathbb  E(e^{-m(\xi)\mathcal{L}^\alpha(t)})\de t&=\int_0^\infty e^{-st}\left(\int_0^\infty e^{-m(\xi)x}\ell_\alpha(x,t)\de x \right)\de t\\
&=(s^{2\alpha-1}+2\lambda s^{\alpha-1})\int_0^\infty e^{-x(m(\xi)+s^{2\alpha}+2\lambda s^\alpha)}\de x\\
&=\frac{s^{2\alpha-1}+2\lambda
  s^{\alpha-1}}{s^{2\alpha}+2\lambda s^{\alpha} + m(\xi)}.
\end{align*}
From \eqref{eq:laplacesol} and the uniqueness of Laplace transform, it follows that
$$\widehat u_\alpha(t,\xi)=\mathbb  E[\widehat v(\mathcal{L}^\alpha(t),\xi)]$$
and then we can conclude the proof by observing that $U^{-1}$ is a linear operator independent by $t$.
\end{proof}

From the previous theorem immediately follows the next result.

\begin{corollary}\label{cor1}
Let $-A$ be the infinitesimal generator on $L^2(\mathbb R^d,\de x)$ of a strong Markov process $\{X(t):t\geq 0\}$ independent of $\mathcal{L}^\alpha(t).$ Then  for $\alpha\in(0,\frac12],$ 
\begin{equation}
  u_\alpha(t,x)=\mathbb E_x\left[f(X(\mathcal{L}^\alpha(t) ))\right] ,\quad t>0,x\in\mathbb R^d. 
\end{equation}
\end{corollary}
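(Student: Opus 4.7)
The plan is to simply unpack the stochastic representation from the preceding theorem and then identify the semigroup $T(t)$ with the transition semigroup of the Markov process $X$. By the hypothesis that $-A$ is the infinitesimal generator of $\{X(t):t\geq 0\}$ on $L^2(\mathbb R^d,\de x)$, the $C_0$ contraction semigroup $\{T(t):t\geq 0\}$ appearing in the previous theorem coincides with the transition semigroup of $X$, namely
\begin{equation*}
T(t)f(x)=\mathbb E_x[f(X(t))],\qquad t\geq 0,\ x\in\mathbb R^d,
\end{equation*}
for every $f\in L^2(\mathbb R^d,\de x)$. Hence the unique solution to the abstract parabolic Cauchy problem \eqref{eq:cauchypar} admits the probabilistic representation $v(t,x)=\mathbb E_x[f(X(t))]$.

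Next, I would apply the previous theorem, which gives
\begin{equation*}
u_\alpha(t,x)=\mathbb E\!\left[v(\mathcal L^\alpha(t),x)\right]=\mathbb E\!\left[\mathbb E_x[f(X(s))]\big|_{s=\mathcal L^\alpha(t)}\right].
\end{equation*}
Since the processes $\{X(t):t\geq 0\}$ and $\{\mathcal L^\alpha(t):t\geq 0\}$ are independent by assumption, I can condition on $\mathcal L^\alpha(t)$ and use the tower property of conditional expectations (together with Fubini, justified by the fact that $f\in L^2$ and $T(t)$ is a contraction) to swap the two expectations:
\begin{equation*}
\mathbb E\!\left[\mathbb E_x[f(X(s))]\big|_{s=\mathcal L^\alpha(t)}\right]=\mathbb E_x\!\left[f(X(\mathcal L^\alpha(t)))\right].
\end{equation*}
This yields the claimed formula.

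The only genuinely delicate point is the identification of $T(t)$ with the transition semigroup of $X$ on $L^2(\mathbb R^d,\de x)$, which is however guaranteed by the assumption that $-A$ is the infinitesimal generator of the strong Markov process; the rest is a standard conditioning argument exploiting independence. No real obstacle is expected, which is consistent with the statement being presented as an immediate corollary.
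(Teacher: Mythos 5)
Your argument is correct and follows exactly the route the paper intends: the paper gives no separate proof, stating only that the corollary follows immediately from the preceding theorem, which is precisely your identification of $T(t)f(x)=\mathbb E_x[f(X(t))]$ combined with the representation $u_\alpha(t)=\mathbb E[v(\mathcal L^\alpha(t))]$ and the independence of $X$ and $\mathcal L^\alpha$. Your write-up simply makes the standard conditioning/Fubini step explicit.
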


\subsection{Special case $\alpha=1/2$}
 We recall that (see, e.g., \cite{gorenflo2020mittag})
    \begin{equation}
        E_{1/2,1}(t) = e^{t^2}(1+\text{erf}(t)),
    \end{equation}
where 
$$\text{erf}(t) =\frac{1}{\sqrt{\pi}} \int_0^t e^{-\tau^2}d\tau $$
is the error function. Therefore, in the case $\alpha = 1/2$, we can represent the solution for the abstract fractional diffusion equation 
\begin{equation}\label{eq:fteq12}
    \bigg[(D_t^{1/2})^2+2\lambda D_t^{1/2}+A\bigg] u =0
\end{equation}
as follows
\begin{equation}
\Phi_{\frac12}(t,x)=\frac{1}{(2\pi)^{\frac d2}||x||^{\frac d2-1}}\int_0^\infty r^{\frac d2}J_{\frac d2-1}(r||x||)\widehat\Phi_{\frac12}(t,r)\de r,\quad t>0.
\end{equation}
where 
\begin{align}
\widehat\Phi_{\frac12}(t,\xi) :=\frac{1}{2}\left[
\bigg(1+\frac{\lambda}{\sqrt{\lambda-m(\xi)}}\bigg) e^{r_1^2t}(1+\text{erf}(r_1\sqrt{t}))
+\bigg(1-\frac{\lambda}{\sqrt{\lambda-m(\xi)}}\bigg)e^{r_2^2t}(1+\text{erf}(r_2\sqrt{t}))\right],
\end{align}
and 
\begin{align}
\nonumber & r_1 := -\lambda+\sqrt{\lambda^2-m(\xi)},\\
\nonumber & r_2 := -\lambda-\sqrt{\lambda^2-m(\xi)}.
\end{align}
Therefore, as $t   \to 0,$
$$\widehat\Phi_{\frac12}(t,\xi)\sim e^{(2\lambda^2-m(\xi))t}+\cosh(t\sqrt{\lambda^2-m(\xi)})+\frac{\lambda}{\sqrt{\lambda-m(\xi)}}\sinh(t\sqrt{\lambda^2-m(\xi)})$$
This case can be particularly useful in order to obtain the asymptotic behavior of the solution as pointed out in \cite{cascaval2002fractional}. Observe, moreover, that in general $(D_t^{1/2})^2\neq D_t$.

Furthermore, we can obtain an alternative interesting representation of \eqref{eq:cf} for $\alpha=1/2.$
\begin{proposition}
Let $\{B(t):t\geq 0\}$ be the standard Brownian motion. We get that
\begin{align*}
\widehat\Phi_\frac{1}{2}(t,\xi) =\mathbb E \left[g(|B(t)|,\xi)\right],
\end{align*}
where 
$$g(t,\xi):=e^{-\lambda t}\left[\cosh(t\sqrt{\lambda^2-m(\xi)})+\frac{\lambda}{\sqrt{\lambda-m(\xi)}}\sinh(t\sqrt{\lambda^2-m(\xi)})\right].$$
\end{proposition}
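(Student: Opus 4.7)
My plan is to prove the identity by expanding the hyperbolic functions in $g$ into exponentials so that $\mathbb{E}[g(|B(t)|,\xi)]$ reduces to a linear combination of moment generating functions of the half-normal random variable $|B(t)|$, which are precisely of the form $e^{r^2t}(1+\mathrm{erf}(r\sqrt t))$ appearing in the definition of $\widehat\Phi_{1/2}(t,\xi)$.

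\smallskip

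\textbf{Step 1: algebraic reshaping of $g$.} Using $\cosh(z)=\tfrac12(e^z+e^{-z})$ and $\sinh(z)=\tfrac12(e^z-e^{-z})$ with $z=s\sqrt{\lambda^2-m(\xi)}$, and pulling the factor $e^{-\lambda s}$ into each exponential, I would write
\begin{equation*}
g(s,\xi)=\frac{1}{2}\Bigl[\bigl(1+\tfrac{\lambda}{\sqrt{\lambda-m(\xi)}}\bigr)e^{r_1 s}+\bigl(1-\tfrac{\lambda}{\sqrt{\lambda-m(\xi)}}\bigr)e^{r_2 s}\Bigr],
\end{equation*}
where $r_1=-\lambda+\sqrt{\lambda^2-m(\xi)}$ and $r_2=-\lambda-\sqrt{\lambda^2-m(\xi)}$ are exactly the coefficients already appearing in the expression of $\widehat\Phi_{1/2}(t,\xi)$. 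Linearity of expectation then gives
\begin{equation*}
\mathbb{E}[g(|B(t)|,\xi)]=\tfrac12\bigl(1+\tfrac{\lambda}{\sqrt{\lambda-m(\xi)}}\bigr)\mathbb{E}[e^{r_1|B(t)|}]+\tfrac12\bigl(1-\tfrac{\lambda}{\sqrt{\lambda-m(\xi)}}\bigr)\mathbb{E}[e^{r_2|B(t)|}].
\end{equation*}

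\smallskip

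\textbf{Step 2: the key Gaussian integral.} I would then establish the identity
\begin{equation*}
\mathbb{E}\bigl[e^{r|B(t)|}\bigr]=e^{r^{2}t}\bigl(1+\mathrm{erf}(r\sqrt{t})\bigr)
\end{equation*}
(with the Brownian motion normalized so that $|B(t)|$ has half-normal density $\tfrac{1}{\sqrt{\pi t}}e^{-x^{2}/(4t)}$, consistent with the formula to be proved) by completing the square in the integrand: writing $rx-\tfrac{x^2}{4t}=-\tfrac{(x-2rt)^2}{4t}+r^2 t$, changing variables $y=(x-2rt)/(2\sqrt t)$, and recognising
$\tfrac{2}{\sqrt\pi}\!\int_{-r\sqrt t}^{\infty}\!e^{-y^2}dy=1+\mathrm{erf}(r\sqrt t)$. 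This same identity is equivalent to the representation $E_{1/2,1}(r\sqrt t)=e^{r^2 t}(1+\mathrm{erf}(r\sqrt t))$ already used in the preceding display for $\widehat\Phi_{1/2}$, so the matching of the two forms is automatic once Step 2 is in hand.

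\smallskip

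\textbf{Step 3: substitution and closing the identity.} Plugging the formula of Step 2 (with $r=r_1$ and $r=r_2$) into the expression obtained in Step 1 reproduces exactly the right-hand side defining $\widehat\Phi_{1/2}(t,\xi)$ in the preceding display. The only subtlety—and the main (mild) obstacle—is that when $m(\xi)>\lambda^{2}$ the square root $\sqrt{\lambda^{2}-m(\xi)}$ is purely imaginary and the $r_i$ become complex. In that regime I would interpret $\mathbb{E}[e^{r_i|B(t)|}]$ and the $\mathrm{erf}$ of a complex argument via analytic continuation of the Gaussian integral in Step~2, which is valid because both sides are entire functions of $r$; equivalently, one may first derive the identity for real $r$ and then extend the resulting equality between $\widehat\Phi_{1/2}(t,\xi)$ and $\mathbb{E}[g(|B(t)|,\xi)]$ as an identity of entire functions of $m(\xi)\in\mathbb C$, so the representation holds for every spectral value.
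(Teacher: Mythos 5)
Your proof is correct and takes essentially the same route as the paper: the core identity in both cases is $\mathbb E\left[e^{r_i|B(t)|}\right]=E_{1/2,1}(r_i\sqrt t)$, which the paper obtains from the integral representation $E_{1/2,1}(x)=\frac{2}{\sqrt\pi}\int_0^\infty e^{-w^2+2xw}\,\de w$ and you obtain by completing the square to reach the equivalent form $e^{r^2t}(1+\mathrm{erf}(r\sqrt t))$; you simply run the computation in the opposite direction, expanding $g$ into the exponentials $e^{r_1 s}, e^{r_2 s}$ instead of recombining them into $\cosh$ and $\sinh$. Your explicit treatment of the case $m(\xi)>\lambda^2$ (complex $r_i$) via analytic continuation is a nice additional care that the paper leaves implicit.
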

\begin{proof}
We observe that (see (4.3) in \cite{orsingher2004time})
$$E_{\frac12,1}(x)=\frac{2}{\sqrt{\pi}}\int_0^\infty e^{-w^2-2xw}\de w.$$
and then
\begin{align*}
   E_{\frac12,1}(r_i\sqrt t)&= \frac{2}{\sqrt{\pi}}\int_0^\infty e^{-w^2+2r_i\sqrt tw}\de w\\
   &=(2 \sqrt t w=z)\\
   &=\frac{1}{\sqrt{\pi t}}\int_0^\infty e^{-\frac{z^2}{4t}+r_iz}\de z\\
   &=\mathbb E\left[e^{|B(t)|r_i}\right],\quad i=1,2,
\end{align*}
where $\frac{1}{\sqrt{\pi t}}e^{-\frac{z^2}{4t}}, z>0,$ represents the density function of $|B(t)|$ for any $t>0.$
Therefore
\begin{align*}
\widehat\Phi_\frac{1}{2}(t,\xi) &=\frac{1}{2}\left[
\bigg(1+\frac{\lambda}{\sqrt{\lambda-m(\xi)}}\bigg)\mathbb E(e^{|B(t)|r_1})+\bigg(1-\frac{\lambda}{\sqrt{\lambda-m(\xi)}}\bigg)\mathbb E(e^{|B(t)|r_2})\right]\\
&=\frac{1}{2}\left[
\bigg(1+\frac{\lambda}{\sqrt{\lambda-m(\xi)}}\bigg)\frac{1}{\sqrt{\pi t}}\int_0^\infty e^{-\frac{z^2}{4t}+r_1z}\de z+\bigg(1-\frac{\lambda}{\sqrt{\lambda-m(\xi)}}\bigg)\frac{1}{\sqrt{\pi t}}\int_0^\infty e^{-\frac{z^2}{4t}+r_2z}\de z\right]\\
&=\frac{1}{2\sqrt{\pi t}}\int_0^\infty e^{-\frac{z^2}{4t}-\lambda z}\left(e^{z\sqrt{\lambda^2-m(\xi)}}+e^{-z\sqrt{\lambda^2-m(\xi)}}\right)\de z\\
&\quad +\frac{1}{2\sqrt{\pi t}}\frac{\lambda}{\sqrt{\lambda-m(\xi)}}\int_0^\infty e^{-\frac{z^2}{4t}-\lambda z}\left(e^{z\sqrt{\lambda^2-m(\xi)}}-e^{-z\sqrt{\lambda^2-m(\xi)}}\right)\de z\\
&=\frac{1}{\sqrt{\pi t}}\int_0^\infty e^{-\frac{z^2}{4t}-\lambda z}\left(\cosh(z\sqrt{\lambda^2-m(\xi)})+\frac{\lambda}{\sqrt{\lambda-m(\xi)}}\sinh(z\sqrt{\lambda^2-m(\xi)})\right)\de z\\
&=\mathbb E \left(e^{-\lambda|B(t)|}\left[\cosh(|B(t)|\sqrt{\lambda^2-m(\xi)})+\frac{\lambda}{\sqrt{\lambda-m(\xi)}}\sinh(|B(t)|\sqrt{\lambda^2-m(\xi)})\right]\right).
\end{align*}
\end{proof}

In the setting of Theorem \ref{thm:ft}, by applying the inverse Fourier transform to $\widehat\Phi_{\frac12}$, we get  the following stochastic representation of the fundamental solution of \eqref{eq:fteq12}
\begin{equation}\label{eq:ss12}
  u_{\frac12}(t,x)=(\mathcal F^{-1}\widehat\Phi_{\frac12}(t))(x)=\mathbb E\left[(\mathcal F^{-1} g(|B(t)|))(x)\right],\quad t>0, x\in\mathbb R^d.  
\end{equation}
If $d=1$ and $A=-\partial_{x}^2,$ which implies $m(\xi)=|\xi|^2,$ it is possible to invert the Fourier transform $\widehat\Phi_{\frac12}(t,\xi)$ and then   from \eqref{eq:ss12} we derive  the result in Theorem 4.2 in \cite{orsingher2004time} (see also Remark \ref{rem1}); that is (see, e.g., \cite{masoliver1996finite})
\begin{align*}
    (\mathcal F^{-1} g(t))(x)&=\frac{e^{-\lambda t}}{2}\left[\lambda I_0(\lambda\sqrt{t^2-x^2})+\partial_tI_0(\lambda\sqrt{t^2-x^2})\right]1_{|x|<t}\\
    &\quad +\frac{e^{-\lambda t}}{2}\left[\delta(x-t)+\delta(x+t)\right],
\end{align*}
which represents the density function of the telegraph process $\{T(t): t\geq 0\},$ where $T(t)=V(0)\int_0^t (-1)^{N(s)}\de s,$ $V(0)=\pm 1$ with probability 1/2 and $\{N(t):t\geq 0\}$ is a homogeneous Poisson process with rate $\lambda>0,$ independent of $V(0).$ Therefore $u_{\frac12}(t,x)$ coincides with the density function of $T(|B(t)|);$ that is
\begin{align*}
 &u_{\frac12}(t,x)\\
& =\frac{1}{2\sqrt{\pi t}}\int_0^\infty   e^{-\frac{z^2}{4t}-\lambda z}\left\{\left[\lambda I_0(\lambda\sqrt{z^2-x^2})+\partial_zI_0(\lambda\sqrt{z^2-x^2})\right]1_{|x|<z} +\left[\delta(x-z)+\delta(x+z)\right]\right\}\de z.
\end{align*}

\section{Applications}

\subsection{The $d$-dimensional space-time fractional telegraph equation}\label{sec:fraclaplace}
The space-time fractional telegraph equation has been an object of many recent papers, we refer for example to \cite{d2014time} and \cite{tawfik2018analytical}. The classical Laplace operator is replaced with its fractional version denoted by $-(-\Delta)^{\beta/2},\beta\in(0,2]$. We recall that by means of the spectral theorem, we can define the fractional Laplace operator $-(-\Delta)^{\beta/2}$ as a Fourier multiplier with symbol $-||\xi||^{\beta},$ where $\xi\in\mathbb{R}^d $; that is
\begin{equation}\label{eq:deffl}
    (\mathcal F(-\Delta)^{\beta/2} f)(\xi)=||\xi||^{\beta}\hat f(\xi),
\end{equation}
where $f\in$ Dom$((-\Delta)^{\beta/2})=\{f\in L^2(\mathbb R^d,\de x):\int_{\mathbb R^d}(1+||\xi||^{\beta})|\hat f(\xi)|^2\de \xi<\infty\}$ (see, e.g., \cite{kwasnicki2017ten} for a detailed discussion on the fractional Laplace operator).  Furthermore, we recall that $\{S_d^{\beta}:t\geq 0\}$ is a $d$-dimensional isotropic stable process with characteristic function given by
$$\mathbb E\left[e^{i\langle \xi, S_d^\beta(t)\rangle}\right]=e^{-t ||\xi||^\beta},\quad \xi\in\mathbb R^d.$$ In view of the general Theorem 1, we have the following result.
\begin{prop}
The solution of the d-dimensional space-time fractional telegraph equation 
\begin{align*}
&\bigg[(D_t^{\alpha})^2+2\lambda D_t^{\alpha}+(-\Delta)^{\beta/2}\bigg] u = 0,\quad t> 0, \ \alpha\in(0,1], \beta \in(0,2], \\
&u(0)=f,\ \alpha \in(0,1],\\
& D_t^{\alpha}u(0)=0,\ \alpha \in(1/2,1],
\end{align*}
is given by the convolution product $u_\alpha(t,x)=(f\ast \Phi_\alpha(t))(x),$ for each $x\in\mathbb R^d,$where
\begin{equation*}
\Phi_\alpha(t,x)=\frac{1}{(2\pi)^{\frac d2}||x||^{\frac d2-1}}\int_0^\infty r^{\frac d2}J_{\frac d2-1}(r||x||)\widehat\Phi_\alpha(t,r)\de r,\quad t>0
\end{equation*}
and 
\begin{align*}
\widehat \Phi_\alpha(t,\xi) :=\frac{1}{2}\left[
\bigg(1+\frac{\lambda}{\sqrt{\lambda-||\xi||^{\beta}}}\bigg)E_{\alpha,1}(r_1t^{\alpha})+\bigg(1-\frac{\lambda}{\sqrt{\lambda-||\xi||^{\beta}}}\bigg)E_{\alpha,1}(r_2t^{\alpha})\right],
\end{align*}
where 
\begin{align}
\nonumber & r_1 := -\lambda+\sqrt{\lambda^2-||\xi||^{\beta}},\\
\nonumber & r_2 := -\lambda-\sqrt{\lambda^2-||\xi||^{\beta}}.
\end{align}
Furthermore, for $\alpha\in(0,\frac12],$  $$u_\alpha(t,x)=\mathbb E_x\left[f(S_d^\beta(\mathcal{L}^\alpha(t) ))\right] ,\quad t>0,x\in\mathbb R^d. $$
\end{prop}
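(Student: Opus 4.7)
The plan is to recognize that this proposition is essentially a direct specialization of the general machinery developed earlier in the paper, so the strategy is to verify the hypotheses of Theorem \ref{thm:ft} and Corollary \ref{cor1} for the operator $A=(-\Delta)^{\beta/2}$ and then read off the conclusions. First I would use \eqref{eq:deffl} to write the spectral decomposition $(-\Delta)^{\beta/2}=\mathcal F^{-1}\,m\,\mathcal F$ with symbol $m(\xi)=\|\xi\|^{\beta}$. Since $m$ depends only on $\|\xi\|$ it is isotropic, so the hypotheses of Theorem \ref{thm:ft} are satisfied on the domain of $(-\Delta)^{\beta/2}$, which is a subspace of $\mathbb H=L^2(\mathbb R^d,\de x)$. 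The operator is also positive and self-adjoint, as required for the spectral setup of Section \ref{sec:intrfte}.

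Next I would obtain the convolution representation and the explicit form of $\widehat\Phi_\alpha$ by direct substitution of $m(\xi)=\|\xi\|^\beta$ into formulas \eqref{eq:cf} and \eqref{eq:invfoursol}. This substitution produces the quadratic roots
\begin{align*}
r_1 &= -\lambda+\sqrt{\lambda^2-\|\xi\|^\beta}, \\
r_2 &= -\lambda-\sqrt{\lambda^2-\|\xi\|^\beta},
\end{align*}
and the stated Mittag-Leffler expression for $\widehat\Phi_\alpha(t,\xi)$. The Hankel-type integral formula for $\Phi_\alpha(t,x)$ then follows verbatim from Theorem \ref{thm:ft}.

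For the stochastic representation in the subdiffusive regime $\alpha\in(0,\frac12]$, I would invoke Corollary \ref{cor1}. The task is to identify the strong Markov process whose infinitesimal generator on $L^2(\mathbb R^d)$ is $-(-\Delta)^{\beta/2}$: this is precisely the isotropic $\beta$-stable Lévy process $\{S_d^\beta(t):t\geq 0\}$, since the characteristic function $\mathbb E[e^{i\langle\xi,S_d^\beta(t)\rangle}]=e^{-t\|\xi\|^\beta}$ implies that $t\mapsto T(t)f:=\mathbb E_x[f(S_d^\beta(t))]$ has Fourier transform $e^{-t\|\xi\|^\beta}\widehat f(\xi)$, which is the unique solution of the abstract parabolic problem \eqref{eq:cauchypar} with $A=(-\Delta)^{\beta/2}$. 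Substituting $X=S_d^\beta$ into Corollary \ref{cor1} and subordinating by the independent inverse process $\mathcal L^\alpha(t)$ yields the claimed formula $u_\alpha(t,x)=\mathbb E_x[f(S_d^\beta(\mathcal L^\alpha(t)))]$.

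The proof is essentially mechanical, so there is no significant technical obstacle; the only point requiring a moment of care is checking that $(-\Delta)^{\beta/2}$ really does fit the abstract framework (positivity, self-adjointness, isotropy of the symbol, and the identification of the generated semigroup with the stable process), but each of these is standard and recalled in the preceding text.
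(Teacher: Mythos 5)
Your proposal is correct and follows essentially the same route as the paper: both specialize Theorem \ref{thm:ft} with the symbol $m(\xi)=\|\xi\|^{\beta}$ from \eqref{eq:deffl} to get the convolution and Mittag-Leffler formulas, and both invoke Corollary \ref{cor1} after identifying $S_d^\beta$ as the process generated by $-(-\Delta)^{\beta/2}$ via its characteristic function. Your write-up merely makes explicit a couple of verifications (isotropy, semigroup identification) that the paper leaves implicit.
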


\begin{proof}
As a matter of fact, this useful representation of the solution to the space-time fractional telegraph equation can be seen as a special case of Theorem 1. Indeed, from the well-known Fourier transform of the fractional Laplacian \eqref{eq:deffl} and by observing that $m(\xi)=||\xi||^{\beta}$, we can immediately derive the statement of the theorem. 

Since $\mathbb E(e^{i\langle \xi, S_d^\beta(t)\rangle})=e^{-t ||\xi||^\beta}$ is the characteristic function of the fundamental solution to the equation
$$[\partial_t+(-\Delta)^{\beta/2}]v=0,$$
from Corollary \ref{cor1} follows the last result stated in the theorem, which essentially coincides with Theorem 4.1 in \cite{d2014time} (see also Remark \ref{rem1}).
\end{proof}
\begin{remark}
  It is worth mentioning that for $\beta=2,$ the fractional Laplace operator coincides with the classical Laplace operator and $\{S_d^2(t):t\geq 0\}$ is the $d$-dimensional standard Brownian motion. If $\beta=\frac12,$ the process  $\{S_d^{1/2}(t):t\geq 0\}$ represents the Cauchy process having density function
  given by the Cauchy law or Poisson kernel
  $$p(t,x)=\frac{\Gamma(\frac{d+1}{2})t}{[\pi(t^2+||x||^2)]^{\frac{d+1}{2}}},\quad t>0,x\in \mathbb R^d.$$
\end{remark}

\subsection{The time fractional telegraph-type equation involving Bessel-Riesz operator}

The Bessel-Riesz process is a $d$-dimensional L\'evy process $\{Y_d^{\beta,\gamma}( t):t\geq 0\}$ with joint characteristic function
\begin{equation}
\widehat{v}(t,\xi) = \mathbb{E} \left[e^{i\langle\xi, Y_d^{\beta,\gamma}(t)\rangle}\right]= e^{-t ||\xi||^\beta(1+||\xi||^2)^{\gamma/2}},
\end{equation}
coinciding with the Fourier transform of the Green function of the equation 
\begin{equation}
\partial_t u = - (-\Delta)^{\beta/2}(I-\Delta)^{\gamma/2}u, \quad, \gamma\geq 0, \beta \in (0,2],
\end{equation}
where $ (-\Delta)^{\beta/2}$ and $(I-\Delta)^{\gamma/2}$ are the inverses of the Riesz and the Bessel potential respectively. Therefore, the operator $- (-\Delta)^{\beta/2}(I-\Delta)^{\gamma/2}$ can be defined as pseudo-differential operator in the space of Fourier transforms as done in the previous section; that is
$$(\mathcal F(-\Delta)^{\beta/2}(I-\Delta)^{\gamma/2} f)(\xi)=||\xi||^\beta(1+||\xi||^2)^{\gamma/2}\hat f(\xi),$$
where $f\in$ Dom$( (-\Delta)^{\beta/2}(I-\Delta)^{\gamma/2})=\{f\in L^2(\mathbb R^d,\de x):\int_{\mathbb R^d}||\xi||^\beta(1+||\xi||^2)^{\gamma/2}|\hat f(\xi)|^2\de \xi<\infty\}$. The Bessel-Riesz operator and the related process have been studied in detail in \cite{anh2004riesz}. In particular, it is possible to prove that $Y_d^{\beta,\gamma}(t)=B_d(L_{\beta,\gamma}(t)),$ where $\{B_d(t):t\geq 0\}$ is a $d$-dimensional Brownian motion and $\{L_{\beta,\gamma}(t):t\geq 0\}$ is a Bessel-Riesz Lévy subordinator with Laplace transform $\exp\{-ts^{\beta/2}(1+s)^{\gamma/2}\},$ where $s>0,\beta+\gamma\in(0,2].$ Furthermore, $B_d(t)$ and $L_{\beta,\gamma}(t)$ are assumed independent.
In \cite{anh2017stochastic}, the authors considered the following Cauchy problem

\begin{align}\label{beri}
&D_t^\alpha u = -\lambda (-\Delta)^{\beta/2}(I-\Delta)^{\gamma/2}u, \quad, \gamma\geq 0, \alpha \in (0,1], \beta\in (0,2],\\
&u(x,0) = \delta(x),\label{beri1}
\end{align}
where the time-fractional derivative is in the sense of Caputo.

The Fourier transform of the solution for the Cauchy problem \eqref{beri}-\eqref{beri1} is given by 
\begin{equation}
\widehat{u}(t,\xi) = E_{\alpha,1}\left(-\lambda t^\alpha||\xi||^\beta(1+||\xi||^2)^{\gamma/2}\right).
\end{equation}
An interesting stochastic interpretation of this result based on the Bessel-Riesz distribution has been provided in \cite{anh2017stochastic}.

Here we consider some new applications of Bessel-Riesz operators to generalized telegraph-type equations. The aim of this section is to consider analytical and probabilistic results about the generalized telegraph-type equation
\begin{align}\label{te}
(D_t^{\alpha})^2 u +2\lambda D_t^{\alpha} u = -(-\Delta)^{\beta/2}(I-\Delta)^{\gamma/2}u, \quad, \gamma\geq 0, \beta \in (0,2], \alpha\in (0,1].
\end{align}
As an application of the general Theorem \ref{thm:ft} we have the following result.

\begin{prop}
The solution of the fractional Cauchy problem involving the Bessel-Riesz operator
\begin{align}
&\bigg[(D_t^{\alpha})^2+2\lambda D_t^{\alpha}+(-\Delta)^{\beta/2}(I-\Delta)^{\gamma/2}\bigg] u =0,\quad t> 0, \ \alpha\in (0,1], \beta \in(0,2], \gamma \geq 0, \label{es2eq:fractelc}\\
&u(0)=f,\ \alpha \in(0,1],\label{es2eq:fractelc2}\\
& D_t^{\alpha}u(0)=0,\ \alpha \in(1/2,1],\label{es2eq:fractelc3}
\end{align}
is given by the convolution product $u_\alpha(t,x)=(f\ast \Phi_\alpha(t))(x),$ for each $x\in\mathbb R^d,$where
\begin{equation}
\Phi_\alpha(t,x)=\frac{1}{(2\pi)^{\frac d2}||x||^{\frac d2-1}}\int_0^\infty r^{\frac d2}J_{\frac d2-1}(r||x||)\widehat\Phi_\alpha(t,r)\de r,\quad t>0
\end{equation}
and 
\begin{align*}
\widehat \Phi_\alpha(t,\xi) &=\frac{1}{2}\left[
\bigg(1+\frac{\lambda}{\sqrt{\lambda-||\xi||^\beta(1+||\xi||^2)^{\gamma/2}}}\bigg)E_{\alpha,1}(r_1t^{\alpha})\right.\\
&\quad\left.+\bigg(1-\frac{\lambda}{\sqrt{\lambda-||\xi||^\beta(1+||\xi||^2)^{\gamma/2}}}\bigg)E_{\alpha,1}(r_2t^{\alpha})\right],
\end{align*}
where 
\begin{align}
\nonumber & r_1 := -\lambda+\sqrt{\lambda^2-||\xi||^\beta(1+||\xi||^2)^{\gamma/2}},\\
\nonumber & r_2 := -\lambda-\sqrt{\lambda^2-||\xi||^\beta(1+||\xi||^2)^{\gamma/2}}.
\end{align}
Moreover, for $\alpha \in (0,\frac{1}{2}]$ and $\beta+\gamma\in(0,2],$ the solution to the problem \eqref{es2eq:fractelc}-\eqref{es2eq:fractelc2} has the following stochastic representation 
\begin{equation*}
u_\alpha(t,x)=\mathbb E_x\left[f(Y_d^{\beta,\gamma}(\mathcal{L}^\alpha(t) ))\right] ,\quad t>0,x\in\mathbb R^d. 
\end{equation*}
\end{prop}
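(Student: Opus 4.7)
The plan is to recognize this proposition as a direct specialization of Theorem \ref{thm:ft} and Corollary \ref{cor1}, taking the operator $A$ to be the Bessel-Riesz operator $(-\Delta)^{\beta/2}(I-\Delta)^{\gamma/2}$. First I would check that this operator fits the framework of Section \ref{sec:intrfte}: the Fourier multiplier characterization given in the excerpt shows that $A$ is unitarily equivalent, via the Fourier transform $\mathcal{F}:\mathbb{H}\to L^2(\mathbb{R}^d,\de x)$, to multiplication by $m(\xi)=||\xi||^{\beta}(1+||\xi||^{2})^{\gamma/2}$. Since $m\geq 0$ and vanishes only on the $\mu$-negligible set $\{0\}$, the operator is positive and injective self-adjoint on the stated domain; since $m$ depends only on $||\xi||$, the isotropy hypothesis of Theorem \ref{thm:ft} is satisfied.

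For the analytical part, I would then simply substitute $m(\xi)=||\xi||^{\beta}(1+||\xi||^{2})^{\gamma/2}$ into the formula \eqref{eq:cf} for $\widehat\Phi_\alpha$ and into the definitions of $r_1,r_2$; the convolution representation $u_\alpha(t,x)=(f\ast\Phi_\alpha(t))(x)$ with $\Phi_\alpha$ given by the Bessel-kernel integral \eqref{eq:invfoursol} then follows immediately from Theorem \ref{thm:ft}, since the isotropy of $m$ is exactly what allows the spherical-coordinate reduction in the proof of Theorem \ref{thm:ft} to apply.

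For the stochastic part (case $\alpha\in(0,\tfrac12]$), I would invoke Corollary \ref{cor1}. The subordinator $\{L_{\beta,\gamma}(t):t\geq 0\}$ with Laplace exponent $s^{\beta/2}(1+s)^{\gamma/2}$ is a well-defined L\'evy subordinator precisely when $\beta+\gamma\in(0,2]$ (this is where the hypothesis enters), and then $Y_d^{\beta,\gamma}(t)=B_d(L_{\beta,\gamma}(t))$ is a L\'evy, hence strong Markov, process on $\mathbb{R}^d$. Its characteristic function $e^{-t||\xi||^{\beta}(1+||\xi||^{2})^{\gamma/2}}$ identifies the Fourier symbol of its infinitesimal generator as $-||\xi||^{\beta}(1+||\xi||^{2})^{\gamma/2}$, so by \eqref{eq:deffl} and the analogous definition for $(I-\Delta)^{\gamma/2}$, the generator coincides with $-A$. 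Applying Corollary \ref{cor1} with $X=Y_d^{\beta,\gamma}$, independent of $\mathcal{L}^\alpha(t)$, delivers the claimed formula $u_\alpha(t,x)=\mathbb{E}_x[f(Y_d^{\beta,\gamma}(\mathcal{L}^\alpha(t)))]$.

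The main obstacle is not analytic but rather one of verification: one must confirm that the Bessel-Riesz process is a genuine strong Markov process whose generator matches the spectral multiplier of $A$, and that the condition $\beta+\gamma\in(0,2]$ indeed guarantees that $s\mapsto s^{\beta/2}(1+s)^{\gamma/2}$ is a Bernstein function, so that $L_{\beta,\gamma}$ exists. Once these identifications are in place (which is essentially the content of \cite{anh2004riesz}), both assertions reduce to pointwise applications of the general machinery established earlier.
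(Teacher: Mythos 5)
Your proposal is correct and follows essentially the same route as the paper: the paper's proof likewise invokes Theorem \ref{thm:ft} and Corollary \ref{cor1} with $m(\xi)=||\xi||^\beta(1+||\xi||^2)^{\gamma/2}$, relying on the preceding discussion of the Bessel-Riesz operator and the subordinated representation $Y_d^{\beta,\gamma}(t)=B_d(L_{\beta,\gamma}(t))$ from \cite{anh2004riesz}. Your additional verifications (isotropy, positivity, the Bernstein-function condition for $\beta+\gamma\in(0,2]$) only make explicit what the paper leaves to that discussion.
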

\begin{proof}
 From the above discussion, the results contained in the theorem follows immediately from Theorem \ref{thm:ft} and Corollary \ref{cor1} by observing that $m(\xi)=||\xi||^\beta(1+||\xi||^2)^{\gamma/2}.$   
\end{proof}

\begin{os}
If we consider the particular case $\alpha =1$ in \eqref{es2eq:fractelc2} we get the Bessel-Riesz telegraph equation
\begin{equation}\label{te1}
\bigg[\partial_{t}^2+2\lambda\partial_t\bigg] u(t,x) =- (-\Delta)^{\beta/2}(I-\Delta)^{\gamma/2}u(t,x), \quad x \in \mathbb{R}^d.
\end{equation}
We obtain a simple solution for the Cauchy problem \eqref{es2eq:fractelc}-\eqref{es2eq:fractelc3} since $\widehat \Phi_1(t,\xi)$ can be represented by using exponential functions
\begin{align*}
\widehat \Phi_1(t,\xi)& = \frac{e^{-\lambda t}}{2}\bigg[\left(1+\frac{\lambda}{\sqrt{\lambda^2+||\xi||^\beta(1+||\xi||^2)^{\gamma/2}}}\right)e^{t\sqrt{\lambda^2-||\xi||^\beta(1+||\xi||^2)^{\gamma/2}}}+\\
\nonumber &+\left(1-\frac{\lambda}{\sqrt{\lambda^2+||\xi||^\beta(1+||\xi||^2)^{\gamma/2}}}\right)e^{-t\sqrt{\lambda^2-||\xi||^\beta(1+||\xi||^2)^{\gamma/2}}}\bigg]\\
&=e^{-\lambda t}\left[\cosh(t\sqrt{\lambda^2-||\xi||^\beta(1+||\xi||^2)^{\gamma/2}}+\frac{\lambda}{\sqrt{\lambda-m(\xi)}}\sinh(t\sqrt{\lambda^2-||\xi||^\beta(1+||\xi||^2)^{\gamma/2}}\right]
\end{align*}
\end{os}

\subsection{The time-fractional telegraph-type relativistic diffusion}

Following the recent paper by Shieh \cite{shieh2012time}, we recall that the time-fractional relativistic diffusion equation is the equation
\begin{equation}\label{rel}
\partial_t u = H_{\nu,m} u,\quad \alpha\in(0,1],
\end{equation}
involving the operator
\begin{equation}
H_{\nu,m}:= m-(m^{\frac{2}{\nu}}-\Delta)^{\frac{\nu}{2}}, 
\end{equation}
that is the relativistic diffusion operator with the spatial-fractional parameter $\nu \in (0,2)$ and the normalized mass parameter $m>0$. We have that the relativistic operator admits Fourier multiplier $[m-(m^{\frac{2}{\nu}}+||\xi||^2)^{\frac{\nu}{2}}]$; that is 
$$(\mathcal FH_{\nu,m}  f)(\xi)=[m-(m^{\frac{2}{\nu}}+||\xi||^2)^{\frac{\nu}{2}}]\hat f(\xi),$$
where Dom$(H_{\nu,m} )=\{f\in\ L^2(\mathbb R^d,\de x):\int_{\mathbb R^d}[m-(m^{\frac{2}{\nu}}+||\xi||^2)^{\frac{\nu}{2}}]|\hat f(\xi)|^2\de \xi<\infty \}.$
Considering the Equation \eqref{rel} with the initial condition $u(x,0) = f(x)$, we recall that the probabilistic meaning of the solution for the Cauchy problem \eqref{rel} is given by (see \cite{shieh2012time}, Proposition 3)
\begin{equation}\label{rel1}
    u(t,x)  = \mathbb{E}_x\left[f(W_d^\nu(t))\right],
\end{equation}
where $W_d^\nu(t),t\geq 0,$ is the Brownian motion on $\mathbb{R}^d$ subordinated by a $\nu/2$ relativistic subordinator. 
The relativistic subordinator $T(t),t\geq  0,$ was introduced by Ryznar in \cite{ryznar2002estimates} as a Lévy process with increasing sample paths with Laplace function given by 
$$\mathbb E\left[e^{- uT(t)}\right]=e^{-t( (m^{\frac{2}{\nu}}-u)^{\frac{\nu}{2}}-m)},\quad u>0.$$ Assuming that the relativistic subordinator and the Brownian motion are independent, the subordinated process $W_d^\nu$ comparing in \eqref{rel1} is a Lévy process on $\mathbb{R}^d$ with characteristic function given by
$$\mathbb E\left[e^{i\langle \xi, W_d^\nu(t)\rangle}\right]=e^{-t ((m^{\frac{2}{\nu}}+||\xi||^2)^{\frac{\nu}{2}}-m)},\quad \xi\in\mathbb R^d.$$

Furthermore, in \cite{shieh2012time} the time-fractional generalization of the equation \eqref{rel} based on the replacement of the classical time-derivative with the Caputo derivative has been studied, while in \cite{beghin2019note} the authors dealt with a time-fractional generalized version of the equation \eqref{rel}.  Here we consider the fractional telegraph-type relativistic diffusion equation. 


\begin{prop}
The solution of the fractional telegraph-type relativistic diffusion 
\begin{align}
&\bigg[(D_t^{\alpha})^2+2\lambda D_t^{\alpha}-H_{\nu,m}\bigg] u =0,\quad t> 0, \ \alpha, \nu \in(0,1], \label{es3eq:fractelcrel}\\
&u(0)=f,\ \alpha \in(0,1],\label{es3eq:fractelc2rel}\\
& D_t^{\alpha}u(0)=0,\ \alpha \in(1/2,1],\label{es3eq:fractelc3rel}
\end{align}
is given by the convolution product $u(t,x)=(f\ast \Phi_\alpha(t))(x),$ for each $x\in\mathbb R^d,$where
\begin{equation*}
\Phi_\alpha(t,x)=\frac{1}{(2\pi)^{\frac d2}||x||^{\frac d2-1}}\int_0^\infty r^{\frac d2}J_{\frac d2-1}(r||x||)\widehat\Phi_\alpha(t,r)\de r,\quad t>0
\end{equation*}
and 
\begin{align*}
    \widehat\Phi_\alpha(t,\xi) = &= \frac{1}{2}\bigg[
\bigg(1+\frac{\lambda}{\sqrt{\lambda-\theta(||\xi||)}}\bigg)E_{\alpha,1}(r_1t^\alpha)+\\
&+\bigg(1-\frac{\lambda}{\sqrt{\lambda-\theta(||\xi||)}}\bigg)E_{\alpha,1}(r_2t^\alpha)\bigg],
\end{align*}
where 
$$\theta (||\xi||) = (m^{\frac{2}{\nu}}+||\xi||^2)^{\frac{\nu}{2}}-m$$
and
\begin{align}
\nonumber & r_1 = -\lambda+\sqrt{\lambda^2-\theta (||\xi||)},\\
\nonumber & r_2 = -\lambda-\sqrt{\lambda^2-\theta (||\xi||)}.
\end{align}
Moreover, for $\alpha \in (0,\frac{1}{2}],$ the solution to the problem \eqref{es3eq:fractelcrel}-\eqref{es3eq:fractelc2rel} has the following stochastic representation 
\begin{equation}
u_\alpha(t,x)=\mathbb E_x\left[f(W_d^\nu(\mathcal{L}^\alpha(t) ))\right] ,\quad t>0,x\in\mathbb R^d. 
\end{equation}
\end{prop}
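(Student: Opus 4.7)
The plan is to recognize this proposition as a direct specialization of Theorem \ref{thm:ft} and Corollary \ref{cor1} to the relativistic operator $-H_{\nu,m}$. I would proceed in two natural stages: first, verify that $-H_{\nu,m}$ falls under the hypotheses of Theorem \ref{thm:ft}, so that the analytic convolution representation follows by a direct substitution; second, identify the Feller semigroup associated with the subordinated process $W_d^\nu$ as the parabolic building block required by Corollary \ref{cor1}.

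For the first stage, I would note that $-H_{\nu,m}$ acts as the Fourier multiplier with symbol $\theta(||\xi||)=(m^{2/\nu}+||\xi||^2)^{\nu/2}-m$. Since $(m^{2/\nu}+||\xi||^2)^{\nu/2}\ge (m^{2/\nu})^{\nu/2}=m$, one has $\theta\ge 0$, and the symbol depends only on $||\xi||$. Hence $-H_{\nu,m}$ is a positive self-adjoint operator on $L^2(\mathbb R^d,\de x)$ with an isotropic Fourier multiplier, and the spectral decomposition required by Theorem \ref{thm:ft} holds with $U=\mathcal F$ and $m(\xi)=\theta(||\xi||)$. The closed-form expression for $\widehat\Phi_\alpha(t,\xi)$ and the convolution representation $u(t,x)=(f\ast\Phi_\alpha(t))(x)$ then follow by substituting $\theta(||\xi||)$ for $m(\xi)$ in formulas \eqref{eq:cf} and \eqref{eq:invfoursol}.

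For the second stage, restricted to $\alpha\in(0,1/2]$, I would apply Corollary \ref{cor1} with $A=-H_{\nu,m}$. The process $W_d^\nu$ recalled in \eqref{rel1} is a Lévy process (being Brownian motion time-changed by the independent relativistic subordinator of Ryznar), hence strong Markov, with characteristic function $\exp(-t\theta(||\xi||))$. By Phillips' theorem on subordination, its strongly continuous contraction semigroup $T(t)f(x)=\mathbb E_x[f(W_d^\nu(t))]$ on $L^2(\mathbb R^d,\de x)$ is generated exactly by $H_{\nu,m}$, so $v(t,x)=\mathbb E_x[f(W_d^\nu(t))]$ solves $[\partial_t-H_{\nu,m}]v=0$ with $v(0)=f$. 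Time-changing by the independent inverse process $\mathcal L^\alpha(t)$ via Corollary \ref{cor1} then yields the claimed stochastic representation.

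The one point that demands a little care, and the main obstacle I foresee, is the matching of two natural descriptions of $\mathrm{Dom}(H_{\nu,m})$: the Bochner-subordination definition arising from representing $W_d^\nu$ as subordinated Brownian motion, and the Fourier-multiplier domain spelled out in the paper. This is standard once one observes that both descriptions characterize the domain via square-integrability of $\theta(||\xi||)\hat f(\xi)$, so no genuinely new computation beyond what is already contained in Theorem \ref{thm:ft} and Corollary \ref{cor1} is required.
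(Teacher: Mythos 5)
Your proposal is correct and follows essentially the same route as the paper: the authors likewise obtain both the convolution formula and the stochastic representation by invoking Theorem \ref{thm:ft} and Corollary \ref{cor1} with $m(\xi)=\theta(||\xi||)$, noting that $W_d^\nu$ is the L\'evy process generated by $H_{\nu,m}$. Your added remarks on positivity of the symbol and on matching the subordination and Fourier-multiplier domains are fine supplementary checks but do not change the argument.
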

 \begin{proof} As in the previous sections, the statements of the theorem follows from Theorem \ref{thm:ft} and Corollary \ref{cor1} by observing that $m(\xi)=\theta (||\xi||).$  
 \end{proof}

\section{The Abstract Euler-Poisson-Darboux equation}

\subsection{Stochastic solution to the Euler-Poisson-Darboux equation} The Euler-Poisson-Darboux (EPD) equation is an hyperbolic telegraph-type equation with singular coefficient whose classical form is given by
\begin{equation}\label{eps}
    \bigg[\partial_{t}^2+\frac{2\lambda}{t}\partial_t\bigg] u(t,x) =\Delta u(t,x),\quad t>0,x\in\mathbb R^d.
\end{equation}
There is a wide literature about this equation, that is still object of recent research, we refer for example to the recent paper \cite{shishkina2017general} and the references therein. Random motions related to \eqref{eps} have been studied in \cite{garra2014random}. 
As pointed out in the literature, the solution for the EPD equation \eqref{eps} can be obtained by means of the fractional integral of the d'Alembert solution to the wave equation with the same initial data. In this section we consider the abstract EPD, showing the connection with the solution of the abstract wave equation. Then, we will get the stochastic interpretation in terms of the D'Alembert solution of the wave equation.

Let $A$ be a positive and injective self-adjoint operator on a Hilbert space $\mathbb H$ as in Section \ref{sec:intrfte}. In particular the same spectral decomposition of $A$ holds true; i.e. there exists a unitary group $U$ and positive measurable function $m$ such that \eqref{eq:specdec} fulfills.
Let us deal with the following abstract  Euler-Poisson-Darboux equation

\begin{align}\label{abepd}
&\bigg[\partial_{t}^2+\frac{2\lambda}{t}\partial_t+A\bigg] u =0,\quad t> 0,\lambda>0,\\
&u(0)=f,\quad \partial_t u(0)=0,\label{abepd1}
\end{align}
where $t\mapsto u(t)\in \mathbb H$ and $f\in D(A).$ The unique solution of the abstract Cauchy problem \eqref{abepd}-\eqref{abepd1} is given by $u(t)= U^{-1} \widehat u (t)$ where $\widehat u(t,\xi)\in L^2(\Omega, \Sigma, \mu).$
The main result of this section is contained in the next theorem.
\begin{theorem}
The function $\widehat{u}$ has the following representations
\begin{align}
\label{lebe}\widehat{u}(t,\xi) &= \left(\frac{2}{t \sqrt{m(\xi)}}\right)^{\lambda-\frac{1}{2}}\Gamma\left(\lambda+\frac{1}{2}\right)J_{\lambda-\frac{1}{2}}\left(t\sqrt{m(\xi)}\right)\widehat f(\xi)\\
&= \frac{\widehat f(\xi)}{B(\lambda,\frac12)t}\int_{-t}^{t}\left(1-\frac{w^2}{t^2}\right)^{\lambda-1}\left(\frac{e^{i \sqrt{m(\xi)}w}+e^{-i\sqrt{m(\xi)}w}}{2}\right)\de w\nonumber \\
&= \frac{1}{B(\lambda,\frac12)t}\int_{-t}^{t}\left(1-\frac{w^2}{t^2}\right)^{\lambda-1}\widehat{\bar{u}}(\xi,w)\de w, \nonumber
\end{align} 
where $\bar{u}$
is the unique solution of the abstract wave equation $$\left(\partial_{t}^2+A\right)u = 0 $$
with the  initial conditions \eqref{abepd1}. Furthermore, the unique solution to the Cauchy problem \eqref{abepd} is given by 
\begin{equation}\label{eq:stocsolepd}
 u(t)=\mathbb E[\bar u(t\sqrt \mathfrak X)],   
\end{equation}
where $\mathfrak X$ is a Beta r.v with parameters $\frac12$ and $\lambda.$
\end{theorem}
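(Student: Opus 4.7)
The plan is to apply the spectral theorem exactly as in Section \ref{sec:intrfte}, so that $\widehat{u}(t,\xi) := (Uu(t))(\xi)$ satisfies, for each fixed $\xi \in \mathfrak{X}$, the scalar Euler-Poisson-Darboux ODE
\begin{equation*}
\partial_t^2 \widehat{u} + \frac{2\lambda}{t}\partial_t \widehat{u} + m(\xi)\widehat{u} = 0, \qquad \widehat{u}(0,\xi)=\widehat{f}(\xi), \quad \partial_t\widehat{u}(0,\xi)=0.
\end{equation*}
The substitution $\widehat{u}(t,\xi) = t^{1/2-\lambda}\, \phi(t\sqrt{m(\xi)})\,\widehat{f}(\xi)$ reduces the ODE to Bessel's equation of order $\lambda-\tfrac12$, and selecting the regular solution together with normalizing via the asymptotic $J_\nu(z) \sim (z/2)^\nu/\Gamma(\nu+1)$ as $z\to 0^+$ yields the first identity in \eqref{lebe}.

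Next I would invoke the Poisson integral representation
\begin{equation*}
J_{\nu}(z) = \frac{(z/2)^{\nu}}{\Gamma(\nu + \tfrac12)\Gamma(\tfrac12)}\int_{-1}^{1}(1-s^2)^{\nu-\frac12}\cos(zs)\,\de s, \qquad \mathrm{Re}\,\nu > -\tfrac12,
\end{equation*}
with $\nu = \lambda-\tfrac12$, and change variable $w=ts$. The constants collapse through $B(\lambda,\tfrac12) = \Gamma(\lambda)\Gamma(\tfrac12)/\Gamma(\lambda+\tfrac12)$ to give the second line of \eqref{lebe}. The third line is then automatic: applying spectral calculus to the abstract wave equation with data \eqref{abepd1} yields $\widehat{\bar{u}}(t,\xi)=\cos(t\sqrt{m(\xi)})\widehat{f}(\xi)$, which is precisely the bracketed cosine term.

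For the probabilistic representation \eqref{eq:stocsolepd}, I would use that $\widehat{\bar{u}}(\,\cdot\,,\xi)$ is even in time (because $\partial_t\bar{u}(0)=0$) to restrict the third integral to $(0,t)$, picking up a factor of $2$. The substitution $x = w^2/t^2$, so that $\de w = t(2\sqrt{x})^{-1}\de x$, converts the kernel into $x^{-1/2}(1-x)^{\lambda-1}/B(\lambda,\tfrac12)$, which is the density of a Beta$(\tfrac12,\lambda)$ law. This gives
\begin{equation*}
\widehat{u}(t,\xi) = \int_0^1 \widehat{\bar{u}}(t\sqrt{x},\xi)\,\frac{x^{-1/2}(1-x)^{\lambda-1}}{B(\lambda,\tfrac12)}\,\de x = \mathbb{E}\bigl[\widehat{\bar{u}}(t\sqrt{\mathfrak{X}},\xi)\bigr],
\end{equation*}
and applying $U^{-1}$, which is linear and commutes with the Beta-expectation by Fubini for Bochner integrals, produces \eqref{eq:stocsolepd}. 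Uniqueness follows from the spectral reduction: two solutions differ by a function whose $\xi$-fibres solve the scalar ODE with zero data, hence vanish.

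The main technical obstacle is not algebraic but functional-analytic: one must verify that $w \mapsto \widehat{\bar{u}}(w,\cdot)$ is a bounded continuous $L^2(\mathfrak{X},\Sigma,\mu)$-valued map on $[-t,t]$, so that the integrals above (and the Beta-expectation) are well-defined as Bochner integrals in $\mathbb{H}$, and so that $U^{-1}$ passes through them. Provided $f\in D(A)$, spectral calculus gives $\|\widehat{\bar{u}}(w,\cdot)\|_{L^2(\mu)} \leq \|\widehat{f}\|_{L^2(\mu)}$ uniformly in $w$ and the needed measurability follows from the explicit cosine expression, so all the formal manipulations are justified.
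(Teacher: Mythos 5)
Your proposal is correct and follows essentially the same route as the paper: spectral reduction to the scalar EPD ODE, identification of the regular Bessel solution $t^{\frac12-\lambda}J_{\lambda-\frac12}(t\sqrt{m(\xi)})$, the Poisson integral representation to pass to the $[-t,t]$ integral of $\cos(w\sqrt{m(\xi)})\widehat f(\xi)=\widehat{\bar u}(w,\xi)$, and the change of variables $x=w^2/t^2$ producing the Beta$(\tfrac12,\lambda)$ law before applying $U^{-1}$. The only cosmetic difference is that the paper spells out well-posedness of the abstract wave problem by factoring $\partial_t^2+A=(\partial_t+i\sqrt A)(\partial_t-i\sqrt A)$ and invoking semigroup theory, while you handle uniqueness and the Bochner-integral interchange directly through the spectral fibres, which is an equally valid justification.
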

\begin{proof}
Since $A$ is self-adjoint on $\mathbb H$ and positive, there exists a unique positive self-adjoint operator $\sqrt A$ such that $(\sqrt A)^2=A.$ First of all, let us consider the abstract wave equation
    \begin{equation}\label{naw}
        \left(\partial_{t}^2+A\right) \bar u(t) = \left (\partial_{t}+i \sqrt{A}\right)\left(\partial_{t}-i\sqrt{A}\right)\bar u(t)=0,
    \end{equation}
    where $$\bar u(t)=\frac{u_1(t)+u_2(t)}{2}.$$
    Let be $u_k(t)= U^{-1}\widehat  u_k(t), k=1,2,$ be the unique solution of
\begin{equation}\label{eq:schrod}
\begin{cases}
 \partial_t u_k=(-1)^{k+1} i\sqrt{A} u_k,\\
   u_k(0)=f.
   \end{cases}
\end{equation} 
and for the spectral theorem
$\widehat  u_k(\xi ,t),=e^{ (-1)^{k+1} i \sqrt {m(\xi}) t}\widehat f(\xi), k=1,2,$ is the unique solution of 
\begin{equation}\label{eq:schrod}
\begin{cases}
 \partial_t \widehat u_k(t,\xi)=(-1)^{k+1} i\sqrt{m(\xi)} \widehat u_k(t,\xi),\\
  \widehat u_k(\xi,0)=\widehat f(\xi),
   \end{cases}
\end{equation} 
Under our assumptions the problem \eqref{eq:schrod}-\eqref{abepd1} is well-posed (see, e.g, Theorem 7.4 in \cite{goldstein2017semigroups}), and then we can say that  $\bar u$ is the unique solution to the abstract wave equation \eqref{naw}.\\
Going back to the abstract EPD equation, by using again the spectral theorem for the Cauchy problem we have that the self-adjoint operator can be treated as a scalar constant and then $u(t)= U^{-1} \widehat u (t),$ where $\widehat u(t,\xi)\in L^2(\Omega, \Sigma, \mu)$ satisfies

 \begin{align}\label{eq:unitaryoper}
&\partial_{t}^2\widehat u(t,\xi)+\frac{2\lambda}{t}\partial_t\widehat u(t,\xi)+m(\xi)\widehat u(t,\xi)  =0,\quad t>0,\\
&\widehat u(0,\xi)=\widehat f,\quad \partial_t \widehat  u(0,\xi)=0.
\end{align}
By recalling the solution of the Bessel equation we have that  (see, e.g., \cite{lebedev:72:special})
\begin{align*}
&\widehat{u}(t,\xi) = \left(\frac{2}{t\sqrt{m(\xi)}}\right)^{\lambda-\frac{1}{2}}\Gamma\left(\lambda+\frac{1}{2}\right)J_{\lambda-\frac{1}{2}}\left(t\sqrt{m(\xi)}\right)\widehat f(\xi)
\end{align*}
The Poisson integral representation of the Bessel functions reads
\begin{equation}\label{eq:poisson}
J_\mu(z)=\frac{(z/2)^\mu}{\sqrt\pi \Gamma(\mu+\frac12)}\int_{-1}^{+1}(1-w^2)^{\mu-\frac12}\cos(z w)\de w
\end{equation}
valid for $\mu>-\frac12,z\in\mathbb R$ (see \cite{lebedev:72:special}, pag. 114, formula (5.10.3)).
By inserting \eqref{eq:poisson} into \eqref{lebe}, we readily have that
\begin{align*}
\widehat u(t,\xi)&=\frac{\Gamma(\lambda+\frac{1}{2})}{\sqrt{\pi}\Gamma(\lambda)}\int_{-1}^1(1-w^2)^{\lambda-1}\cos(t\sqrt{ m(\xi)}w)\widehat f(\xi)\de w\\
&=\frac{\Gamma(\lambda+\frac{1}{2})}{\sqrt{\pi}\Gamma(\lambda)}\int_{-1}^1(1-w^2)^{\lambda-1}\left(\frac{e^{it \sqrt{m(\xi)}w}+e^{-it\sqrt{m(\xi)}w}}{2}\right)\widehat f(\xi)\de w\\
&=\frac{\Gamma(\lambda+\frac{1}{2})}{\sqrt{\pi}\Gamma(\lambda)t}\int_{-t}^{t}\left(1-\frac{w^2}{t^2}\right)^{\lambda-1}\left(\frac{e^{i \sqrt{m(\xi)}w}+e^{-i\sqrt{m(\xi)}w}}{2}\right)\widehat f(\xi)\de w\\
&=  \frac{\Gamma(\lambda+\frac{1}{2})}{\sqrt{\pi}\Gamma(\lambda)t}\int_{-t}^{t}\left(1-\frac{w^2}{t^2}\right)^{\lambda-1}\widehat{\bar{u}}(\xi,w)\de w \nonumber
\end{align*}
as claimed. Furthermore, it is not hard to prove that
$$\widehat u(t,\xi)=\mathbb E[\widehat{\bar{u}}(t\sqrt{\mathfrak X},\xi)],$$
where $\mathfrak X$ is a Beta r.v with parameters $\frac12$ and $\lambda,$ and then the representation \eqref{eq:stocsolepd} immediately follows. 
\end{proof}

\begin{remark}
    We recall that it is possible to get the solution $\bar u(t)$ to the abstract wave equation \eqref{naw}, by means of the spectral functional calculus as follows
    $$\bar u(t)=\cos(t\sqrt A)f=\int_0^\infty \cos(t\sqrt x)\de{\bf E}(x) f,$$
    where ${\bf E}$ represents the resolution of the identity for $\sqrt A$ (see, e.g., \cite{goldstein2017semigroups}). When $A$ admits discrete spectrum, the above spectral integral reduces to the following Fourier series
    $$\bar u(t)=\sum_{k=1}^\infty \langle f,\phi_k\rangle \cos(t\sqrt{x_k})\phi_k,$$
    where $\{x_k:k\geq 1\}$ is a sequence of positive eigenvalues and $\{\phi_k:k\geq 1\}$ is an orthonormal basis of eigenvectors such that $A\phi_k=x_k\phi_k.$
\end{remark}

\begin{remark}
Theorem 4 proves that the solution of the Cauchy problem \eqref{abepd}-\eqref{abepd1} for the abstract EPD equation can be represented as the Erd\'elyi-Kober fractional integral of the solution of the abstract wave equation \eqref{naw}. We refer directly to the paper written by Erd\'elyi \cite{erd1970} that gives the first proof of the relation between the solution of the EPD equation and the D'Alembert solution, i.e. in the particular case $A = -\Delta $. This relation has been object of many studies. We refer for example to the paper by Rosencrans \cite{rosencrans1973diffusion}, where the author obtained a representation similar to \eqref{eq:stocsolepd}.

We recall that the Erd\'elyi-Kober fractional integral is defined as (see, e.g., \cite{kilbas2006})
$$(I_\alpha^{m}f)(x)=\frac{m}{\Gamma(\alpha)}\int_0^x (x^m-y^m)^{\alpha-1}y^{m-1}f(y)\de y,\quad \alpha>0,m>0.$$
Therefore, we can observe that \eqref{eq:stocsolepd} can be expressed in terms of the Erd\'elyi-Kober fractional integral as follows
$$u(t)=\frac{\Gamma(\lambda+\frac12)}{\sqrt{\pi}}I_\lambda^1(\bar u(t\sqrt{y})/\sqrt y).$$
\end{remark}
\begin{remark}
 Let $$\widehat{u}_0(t,\xi):=\left(\frac{2}{t \sqrt{m(\xi)}}\right)^{\lambda-\frac{1}{2}}\Gamma\left(\lambda+\frac{1}{2}\right)J_{\lambda-\frac{1}{2}}\left(t\sqrt{m(\xi)}\right).$$ If $U=\mathcal F,$ with $\mathcal F:\mathbb H\to L^2(\mathbb R^d,\de x),$ $U^{-1}=\mathcal F^{-1},$ and $m(\xi)=m(||\xi||),$  for $t\geq0$ and $x\in\mathbb R^d,$ we have the following alternative representation of the solution to the EPD equation $$u(t,x)=(f* \Psi(t))(x),$$
 where
 \begin{align*}
   \Psi(t,x)&=(\mathcal F^{-1}\widehat{u}_0(t))(x)\\
   &=\frac{1}{(2\pi)^d}\int_{\mathbb R^d} e^{-i\langle x,\xi\rangle}\widehat{u}_0(t,||\xi||) \de \xi\\
   &=\frac{\Gamma(\lambda +\frac12)}{(2\pi)^{\frac d2}||x||^{\frac d2-1}}\left(\frac2t\right)^{\lambda-\frac12}\int_0^\infty r^{\frac d2}J_{\lambda-\frac12}(t\sqrt{m(r)})J_{\frac d2-1}(r||x||)   \de r,
   \end{align*}
   where in the last step we have used the same approach developed in the proof of Theorem \ref{thm:ft}.
 
\end{remark}

Finally, we suggest a heuristic construction of the solution to the EPD equation by means of a finite velocity random process. 
Let $X_\varepsilon:=\{X_\varepsilon(t), t\geq 0\}$ a telegraph process (starting its motion upward) defined as follows
\begin{equation*}
X_\varepsilon(t)=\int_0^t (-1)^{N_\varepsilon(s)}\de s,
\end{equation*}
where $\{N_\varepsilon(t), t\geq 0\}$ is an inhomogenous Poisson process with rate function $\lambda (t)=\lambda/(t+\varepsilon),$ for $\varepsilon>0,$ and then $\Lambda(t)=\int_0^t\lambda(s)\de s=\lambda[\log(t+\varepsilon)-\log(\varepsilon)]<\infty.$ 
Therefore $ u_\varepsilon(t)=\mathbb E[\bar u(X_\varepsilon(t))]$ satisfies (see \cite{kaplan1964differential})
\begin{equation}\label{eq:pdeitp}
\bigg[\partial_{t}^2+\frac{2\lambda}{t+\varepsilon}\partial_t\bigg] u(t) = -Au(t).
\end{equation}
Therefore \eqref{eq:pdeitp} suggests that $u_\varepsilon(t)$ tends to the solution \eqref{eq:stocsolepd} of the EPD equation, as $\varepsilon\to0$; that is
$$ u_\varepsilon(t)\to u_0(t)=\bar u(t),\quad \varepsilon\to0.$$

\subsection{Applications: The space-fractional EPD equations}
The space-fractional Euler-Poisson-Darboux equation, where $A=-(-\Delta)^{\beta/2}$ defined as in Section \ref{sec:fraclaplace}, has been recently studied in \cite{de2020random}.
On the basis of the previous analysis we have a complete picture about the solution of a space-fractional EPD equation. 
Let us consider the following Cauchy problem 
 \begin{align}\label{fepd}
    \begin{cases}
\left[\partial_{t}^2+\frac{2\lambda}{t}\partial_t-(-\Delta)^{\beta/2}\right] u(t,x)=0, \quad \beta \in (0,2],t>0,x\in\mathbb R^d,\\
u(0,x)=f(x),\\
\partial_t u(0,x)=0,
\end{cases}
\end{align}
with $f\in$ Dom$((-\Delta)^{\beta/2}).$
We have the following result.
\begin{prop}
The Fourier transform of the solution of the problem \eqref{fepd} is given by 
\begin{align*}
\label{lebe}\widehat{u}(t,\xi) &= \left(\frac{2}{t ||\xi||^{\beta/2}}\right)^{\lambda-\frac{1}{2}}\Gamma\left(\lambda+\frac{1}{2}\right)J_{\lambda-\frac{1}{2}}\left(t||\xi||^{{\beta}/2}\right)\widehat f(\xi)\\
&= \frac{1}{B(\lambda,\frac12)t}\int_{-t}^{t}\left(1-\frac{w^2}{t^2}\right)^{\lambda-1}\left(\frac{e^{i ||\xi||^{{\beta}/2}w}+e^{-i||\xi||^{{\beta}/2}w}}{2}\right)\widehat f(\xi)\de w. 
\end{align*} 
Furthermore, we have the following stochastic solution
\begin{align*}
    u(t,x)&=  \mathbb E[\widebar{u}(t\sqrt{\mathfrak X},x)] 
 \end{align*}
where $\widebar{u}(t,x)=(\mathcal F^{-1}\cos(t||\xi||^{\beta/2})\widehat{f}(\xi))(x)$ is the solution to the abstract wave equation \eqref{naw}.
\end{prop}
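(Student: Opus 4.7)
The plan is to present the proposition as a direct specialization of Theorem 4 to the choice $A = (-\Delta)^{\beta/2}$ on $\mathbb H = L^2(\mathbb R^d,\de x)$. By \eqref{eq:deffl} this operator is positive, self-adjoint, and admits the spectral decomposition $A = \mathcal F^{-1} M_m \mathcal F$ with the isotropic multiplier $m(\xi) = \|\xi\|^\beta$, so in the language of Section \ref{sec:intrfte} we take the unitary operator $U = \mathcal F$ and observe that $\sqrt{m(\xi)} = \|\xi\|^{\beta/2}$. The hypothesis $f \in \mathrm{Dom}((-\Delta)^{\beta/2})$ matches the requirement $f \in D(A)$ of Theorem 4, so all the conclusions of that theorem transfer verbatim.

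To obtain the first displayed formula for $\widehat u(t,\xi)$, I substitute $\sqrt{m(\xi)} = \|\xi\|^{\beta/2}$ directly into the Bessel-function representation \eqref{lebe}. The integral representation in the second line then follows by invoking the Poisson integral formula \eqref{eq:poisson} for $J_{\lambda - 1/2}$, exactly as in the proof of Theorem 4; no new computation is needed beyond the symbol substitution.

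For the stochastic representation, I would apply \eqref{eq:stocsolepd}, which yields $u(t) = \mathbb E[\bar u(t\sqrt{\mathfrak X})]$ with $\mathfrak X \sim \mathrm{Beta}(\tfrac12,\lambda)$ and $\bar u$ the unique solution of the abstract wave equation $(\partial_t^2 + (-\Delta)^{\beta/2})\bar u = 0$ with initial data $\bar u(0) = f$, $\partial_t \bar u(0) = 0$. By the spectral calculus (or equivalently by solving the ODE \eqref{eq:unitaryoper} with the second-order term replaced by $\|\xi\|^{\beta}$ and vanishing time-derivative at $0$), the Fourier transform of $\bar u$ is $\widehat{\bar u}(t,\xi) = \cos(t\|\xi\|^{\beta/2})\widehat f(\xi)$, so inverting gives $\bar u(t,x) = (\mathcal F^{-1} \cos(t\|\xi\|^{\beta/2}) \widehat f(\xi))(x)$, matching the statement.

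I do not anticipate any real obstacle: the entire argument is a translation of Theorem 4 under a specific spectral calculus, and the only point to verify explicitly is that $\beta \in (0,2]$ guarantees the positivity and self-adjointness of $(-\Delta)^{\beta/2}$ with the stated Fourier symbol. The cleanest way to present the proof is therefore simply to invoke Theorem 4 with $m(\xi) = \|\xi\|^\beta$, cite \eqref{eq:deffl} for the spectral picture, and read off the three claimed formulas.
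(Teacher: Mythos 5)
Your proposal is correct and follows essentially the same route as the paper, which also obtains the result as an immediate corollary of Theorem 4 by taking $m(\xi)=\|\xi\|^{\beta}$ via the Fourier multiplier definition of the fractional Laplacian. Your write-up merely spells out the substitution $\sqrt{m(\xi)}=\|\xi\|^{\beta/2}$ and the wave-equation solution $\widehat{\bar u}(t,\xi)=\cos(t\|\xi\|^{\beta/2})\widehat f(\xi)$ in slightly more detail than the paper does.
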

\begin{proof}
In fact, this is a corollary of the Theorem 4, by using the Fourier transform of the fractional Laplacian of order $\beta/2$ and $m(\xi)=||\xi||^\beta$. 
\end{proof}

\section{Conclusions}
In this paper, we have studied the abstract time-fractional telegraph equation from both analytical and probabilistic viewpoints. We provide a general scheme for obtaining the exact solutions to applied problems, such as those involving the fractional Laplacian or the Bessel-Riesz operator. The second part of the paper considers the abstract Euler-Poisson-Darboux (EPD) equation. The connection to the first part is twofold: First, the EPD equation can be viewed as a telegraph equation with a variable (singular) rate. Second, it is well-known that the EPD solution is related to the D'Alembert solution of the classical wave equation via a fractional integral. In this work, we consider the abstract EPD and provide the analytical and probabilistic representation of its solution. We also show that the general theory can be applied to solve interesting mathematical problems, such as the space-fractional EPD equation.

The abstract fractional equations considered here involve time-fractional derivatives in the sense of Caputo. An interesting problem for future research is to study the general theory for abstract telegraph-type equations that involve integro-differential operators with different memory kernels. This research direction would follow recent work on anomalous diffusion involving general fractional derivatives with Sonin Kernels (see, e.g., \cite{kochubei2011general}, \cite{toaldo2015convolution}, \cite{chen2017time}, \cite{gorska2020generalized}, \cite{alkandari2025anomalous} and the references therein).

Another open question is to determine which fractional version of the EPD equation is most useful to consider. The approach based purely on time-fractional derivatives does not seem to be effective in this particular case. Indeed, since in  the operator $(D_t^\alpha)^2+\frac{2\lambda}{t} D_t^\alpha$ the term $\frac{2\lambda}{t}$ appears,  the Laplace transform method is challenge. A possible alternative is to deal with fractional powers of the time operator that appears in the EPD equation.

\section*{acknowledgments}
 The research of ADG is partially supported by  Italian Ministry of University and Research (MUR) under PRIN 2022 (APRIDACAS), Anomalous Phenomena on Regular and Irregular Domains: Approximating Complexity for the Applied Sciences, Funded by EU - Next Generation EU
CUP B53D23009540006 - Grant Code 2022XZSAFN - PNRR M4.C2.1.1.

	\bibliographystyle{abbrv}
	\bibliography{references}

\end{document}